 \newtheorem{thm}{Theorem}[section]
 \newtheorem{prop}{Proposition}[section]
 \theoremstyle{definition}
 \newtheorem{defn}{Definition}[section]
 \newtheorem{rem}{Remark}[section]
 \numberwithin{equation}{section}
\def\f{\frac}
\def\pa{\partial}
\def\pd #1#2{\f{\pa #1}{\pa #2}}
\def\e{\eqref}
\def\la{\lambda}
\def\vep{\varepsilon}
\def\vph{\varphi}
\def\wt #1{\widetilde{#1}}
\def\ov #1{\overline{#1}}
\def\i1n{i=1,\cdots,n}
\def\j1n{j=1,\cdots,n}
\def\ij1n{i,j=1,\cdots,n}
\def\R{\mathbb R}
\def\N{\mathbb N}
\title{Exact boundary controllability for 1-D quasilinear hyperbolic systems with a vanishing characteristic speed}
\author{
Jean-Michel Coron\thanks{
Institut universitaire de France and Universit\'{e} Pierre et Marie
Curie-Paris 6, UMR 7598 Laboratoire Jacques-Louis Lions, Paris,
F-75005 France. E-mail: {\tt coron@ann.jussieu.fr}. JMC was
partially supported by the ``Agence Nationale de la Recherche''
(ANR), Project C-QUID.},
Oliver Glass\thanks{Universit\'{e} Pierre et Marie Curie-Paris 6, UMR 7598 Laboratoire
Jacques-Louis Lions, Paris, F-75005 France. E-mail: {\tt
glass@ann.jussieu.fr}. OG was partially supported by the by the ``Agence Nationale de la Recherche''
(ANR), Project Contr\^{o}leFlux.}
and Zhiqiang Wang\thanks{School of Mathematical Sciences, Fudan
University, Shanghai 200433, China. Universit\'{e} Pierre et Marie
Curie-Paris 6, UMR 7598 Laboratoire Jacques-Louis Lions, Paris,
F-75005 France. E-mail: {\tt wzq@fudan.edu.cn}. ZW was partially
supported by the Natural Science Foundation of China grant 10701028
and Fondation Sciences Math\'{e}matiques de Paris. }
}
\date{February 11, 2009}
\begin{document}
%
%

\maketitle
\begin{abstract}
The general theory on exact boundary controllability for general first order
quasilinear hyperbolic systems requires that the characteristic speeds of system
do not vanish. This paper deals with exact boundary controllability, when this is not the case.
Some important models are also shown as applications of the main result. The strategy uses the return method,
which allows in certain situations to recover non zero characteristic speeds.
\end{abstract}
{\bf Keywords:}\quad Quasilinear hyperbolic system, vanishing characteristic speed, exact boundary controllability, return method
{\bf 2000 MR Subject Classification:}\quad 35L50, 93B05, 93C20
\section{Introduction and main results}
The general theory on exact boundary controllability for general first order
quasilinear hyperbolic systems requires that the system has non vanishing
characteristic speeds \cite{Li,LiRao}.
Several papers have dealt with hyperbolic systems having a vanishing or an identically zero characteristic speed, under various assumptions.
For systems with identically zero characteristic speeds, a general result on exact controllability
 has been obtained by using internal controls \cite{LiYu}.
It is also possible to get in this case partial controllability by boundary controls, if some
eigenvalue of the system is equal to zero identically \cite{WangYu}.
A steady state controllability holds for some special hyperbolic models with vanishing characteristic speed as Saint-Venant equations (or shallow water equations), see Gugat \cite{Gugat}. For what concerns the system of isentropic gas dynamics (which contains the Saint-Venant model), a  more general boundary controllability result for (non constant) $BV$ solutions was obtained by the second author in \cite{Glass}. \par
In this paper, we will discuss exact boundary controllability for a general hyperbolic
system which admits a vanishing characteristic speed. \par
Consider the following first order quasilinear hyperbolic system
\begin{equation}
\label{sys} \pd ut+A(u)\pd ux=0, \ (t,x) \in [0,T] \times [0,L],
\end{equation}
where $u=(u_1,\cdots,u_n)^{tr}(t,x)$ is the state of the system in some nonempty open set $\Omega \subset \mathbb{R}^n$ and the $n \times n$ matrix $A$ belongs to $C^2(\Omega;\R^{n\times n})$. \par
Let $u^*\in \Omega$ be fixed. Assume that $A(u^*)$ has $n$
real distinct eigenvalues:
\begin{equation}
\label{eigenvalue}
\la_1(u^*)<\cdots<\la_{m-1}(u^*)<\la_m(u^*)=0<\la_{m+1}(u^*)
<\cdots<\la_n(u^*),
\end{equation}
for some $m \in \{1,\cdots,n\}$, which are the characteristic speeds at which the system propagates.
Thus in a neighborhood of the equilibrium $u=u^*$, the system is strictly hyperbolic and $A(u)$
has a complete set of left (resp. right) eigenvectors
$l_1(u),\cdots,l_n(u)$ (resp. $r_1(u),\cdots,r_n(u)$):
\begin{equation}
l_i(u)A(u)=\la_i(u)l_i(u) \quad
(\text {resp.} \ A(u)r_i(u)=\la_i(u)r_i(u)), \quad \i1n.
\end{equation}
Without loss of generality, let us assume that
\begin{equation}
\label{li-rj}
l_i(u)r_j(u)=\delta_{ij}, \quad \ij1n,
\end{equation}
where $\delta_{ij}$ is Kronecker's symbol. Reducing $\Omega$ if necessary, we assume that
\begin{equation}
\forall j \in \{1, \dots, m-1\}, \ \la_j(u)<0 \text{ and } \forall j \in \{m+1, \dots, n\}, \ \la_j(u)>0, \quad \forall u\in \Omega.
\end{equation}
Now the question is: is it  possible to realize the local exact
controllability near the equilibrium $u=u^*$ only by using boundary
controls? \par
In order to overcome the difficulty of a characteristic speed vanishing at $u^{*}$, we assume the
following hypothesis:
\begin{itemize}
\item[{\bf (H)}: \ ]  for all $\vep>0$, there exists
$\alpha=(\alpha_1,\cdots,\alpha_{m-1},\alpha_{m+1},\cdots, \alpha_n)
\in L^{\infty}(0,1; \mathbb{R}^{n-1})$ with
\begin{equation}
\|\alpha\|_{L^{\infty}(0,1; \mathbb{R}^{n-1})} \leq \vep,\end{equation}
such that the solution $z\in C^0([0,1];\mathbb{R}^n)$ of the ordinary differential equation
\begin{equation} \label{SysControleDimFinie}
 \frac{dz}{ds}=\sum_{j\neq m}\alpha_j(s)r_j(z),\quad z(0)=u^*,
\end{equation}
satisfies
\begin{equation}
\la_m(z(1))\neq 0. \end{equation}
\end{itemize}
\medskip
The main result of this paper is  the following theorem:
\begin{thm}\label{main-thm}
Let \e{eigenvalue} and \emph{(H)} be true. Then, for any $\delta >0$, there exist $T>0$ and $\nu>0$ such that, for all $\vph, \psi \in C^1([0,L]; \mathbb{R}^n)$ satisfying
\begin{equation}
\|\vph(\cdot)-u^*\|_{C^1([0,L])}\leq \nu,
  \quad \|\psi(\cdot)-u^*\|_{C^1([0,L])}\leq \nu, \end{equation}
there exists $u\in C^1([0,T]\times [0,L]; \mathbb{R}^n)$ such that
\begin{align}
 & \pd ut+A(u)\pd ux=0,\quad \forall (t,x)\in [0,T] \times [0,L],\\
 &u(0,x)=\vph(x),\quad \forall x\in [0,L],\\
 & u(T,x)=\psi(x),\quad \forall x\in [0,L],\\
 & \|u(t,\cdot)-u^*\|_{C^1([0,L])} \leq \delta, \quad\forall t\in [0,T].
 \end{align}
\end{thm}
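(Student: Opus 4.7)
The plan is to apply the return method. Given $\delta>0$, I would first produce a reference trajectory $\bar u\in C^1([0,T]\times[0,L];\R^n)$ solving \e{sys} with $\bar u(0,\cdot)\equiv\bar u(T,\cdot)\equiv u^*$ and $\|\bar u(t,\cdot)-u^*\|_{C^1}\leq \delta/2$ for every $t\in[0,T]$, along which the $m$-th characteristic speed $\lambda_m(\bar u(t,x))$ has absolute value bounded below by some $c>0$ on a sub-region large enough that every backward $m$-th characteristic emanating from $\{t=T\}$ exits the strip through a lateral boundary $\{x=0\}$ or $\{x=L\}$. Once such a $\bar u$ is at hand, the linearization of \e{sys} around $\bar u$ is strictly hyperbolic with characteristic speeds that are, for the purpose of boundary controllability, all effectively non-zero (since $\lambda_j(\bar u)$ with $j\neq m$ have definite sign by the separation assumption, and $\lambda_m\circ\bar u$ is large on the indicated sub-region); the classical theory of \cite{Li,LiRao} then applies, and a Picard iteration in $C^1$ on the Riemann-invariants form of \e{sys} transfers this linear controllability into local controllability of the quasilinear system in a $C^1$-neighborhood of $\bar u$, yielding $\nu$ and the desired solution $u$.

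To construct $\bar u$, fix $\vep>0$ small enough that the curve $z\in C^0([0,1];\R^n)$ produced by \textbf{(H)} (with its associated $\alpha\in L^\infty(0,1;\R^{n-1})$) obeys $\|z-u^*\|_{C^0}\leq \delta/4$; then $c:=|\lambda_m(z(1))|>0$ is a fixed positive constant depending only on $\vep$. I split $[0,T]$ into three phases. On a first interval $[0,T_1]$, I would use a regularization of the data $\alpha_j$ as slowly varying boundary data injected along the characteristic families $j\neq m$ (which have non-vanishing speed near $u^*$, so the boundary value problem is classically well-posed in $C^1$), arranged so that by the time $t=T_1$ the profile $\bar u(T_1,\cdot)$ parameterises, up to small error, the curve $s\mapsto z(s)$ as $x$ ranges over a portion of $[0,L]$; in particular $|\lambda_m(\bar u(T_1,\cdot))|\geq c/2$ on an interval of $x$'s near the corresponding endpoint. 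On a middle interval $[T_1,T_2]$ I would keep the boundary data constant so that $\bar u$ remains essentially stationary and $|\lambda_m(\bar u)|\geq c/2$ on the whole sub-region $[T_1,T_2]\times I$, with $T_2-T_1$ chosen large enough to absorb the backward $m$-th characteristics issued from $\{t=T\}$; on $[T_2,T]$ I would reverse the first phase to bring $\bar u$ back to $u^*$.

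The main obstacle is precisely this construction. The constraints $\bar u(0,\cdot)=\bar u(T,\cdot)=u^*$ and $|\lambda_m(\bar u)|\geq c/2$ on a fixed open set are in tension because $\lambda_m(u^*)=0$ forces $\bar u$ to deviate from $u^*$ by at least $c/(2\,\mathrm{Lip}(\lambda_m))$ wherever the lower bound is required, so one cannot simply send $\bar u$ to $u^*$ uniformly. Hypothesis \textbf{(H)} is tailor-made to resolve this: it is exactly the finite-dimensional controllability statement saying that, with arbitrarily small cost, one can reach the open set $\{\lambda_m\neq 0\}$ from $u^*$ along trajectories of \e{SysControleDimFinie}, whose controls are in correspondence with the boundary actions available in the PDE (the eigendirections $r_j$ for $j\neq m$, along which data can be fed in without exciting the degenerate $m$-th mode). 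The technical bulk consists in regularizing the $L^\infty$ finite-dimensional control $\alpha$ into $C^1$ boundary data for the PDE, in gluing the three phases into a genuine $C^1$ solution compatible along all characteristic cones, and in quantifying $T$ and $c$ (and hence $\nu$) in terms of the given $\delta$.
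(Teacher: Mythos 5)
Your overall plan correctly identifies the return method and the role of hypothesis (H), and the three-phase time decomposition is the same skeleton the paper uses. But there is a genuine gap in the central controllability step.

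You claim that, once $\bar u$ is constructed, the linearization of \e{sys} around $\bar u$ on the whole strip $[0,T]\times[0,L]$ can be treated by the ``classical theory of \cite{Li,LiRao}'' because the characteristic speeds are ``for the purpose of boundary controllability, all effectively non-zero.'' This is exactly the point that cannot be hand-waved. The Li--Rao theory requires the characteristic speeds to be bounded away from zero \emph{uniformly} on the region where it is applied, and near $t=0$ and $t=T$ your $\bar u$ is forced back to $u^*$, where $\lambda_m$ vanishes. No linearized-then-Picard argument centered on the full trajectory $\bar u$ will meet Li--Rao's hypotheses, and the sign-definiteness of the $\lambda_j$ with $j\ne m$ does not help the degenerate $m$-th family near the endpoints. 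The paper explicitly notes (in a remark after the statement of the theorem) that it ``will quite not use the linearized equation,'' precisely because this route stalls.

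The paper's resolution is structurally different and you should compare it with your outline. Rather than trying to control the system over the whole ramp, the paper uses an \emph{uncontrolled} perturbation/Gronwall estimate (Proposition~\ref{lem 3.1}) on the ramp-up and ramp-down intervals: since the initial (resp.\ final) profile is $C^1$-close to $u^*=\bar u(0,\cdot)$ (resp.\ $\bar u(T,\cdot)$), the corresponding Cauchy solutions simply \emph{track} $\bar u$ forward from $t=0$ and backward from $t=T$, with error controlled by $\nu$. Controllability is invoked only on the central plateau $[\tau_p,\tau_{p+1}]$ where $\bar u\equiv\bar u^*$ is constant with \emph{all} speeds nonzero (\eqref{la-i-over-u*}), and there the Li--Rao theorem is applicable verbatim because one is near a genuine non-degenerate equilibrium. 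This circumvents the tension you correctly flagged (``one cannot simply send $\bar u$ to $u^*$ uniformly'') not by making the linearized system controllable globally in time, but by never needing to control during the ramps. Your sketch does not contain this perturbation-tracking idea, and without it the argument does not close.

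A secondary, more cosmetic difference: you propose to feed a regularization of $\alpha$ directly as $C^1$ boundary data, whereas the paper first replaces $\alpha$ by a piecewise bang-bang control with a single active component per time slice (Proposition~\ref{prop 2.3}--\ref{prop 2.4}) and then implements each slice as an explicit simple wave (Proposition~\ref{prop 2.1}). Both approaches aim at the same approximation, but the simple-wave route gives explicit $C^2$ solutions with sharp $C^1$ bounds and a clean ``one Riemann invariant active at a time'' structure that makes the global-in-time a priori estimate essentially one-dimensional; a generic regularization of $\alpha$ would excite several families simultaneously and make the $C^1$ bound over the fixed time $T$ harder to obtain. This is worth keeping in mind if you try to fill in the details of your version of the construction.
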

The hypothesis (H) seems quite difficult to check. However, we have
some sufficient conditions of (H) relying on {Lie brackets}.

\begin{prop}\label{Sufficient}
The following properties are sufficient conditions for (H) to hold:
\begin{itemize}
\item[\bf (H1):\ ] there exists $j\in \{1,\cdots,n\} \setminus \{ m \}$ such that
   $ \nabla \la_m (u^*) \cdot r_j(u^*) \neq 0$,
\item[\bf (H2):\ ] there exist $j,k\in \{1,\cdots,n\}  \setminus \{ m \}$ such that
   $ \nabla \la_m (u^*)\cdot [r_j,r_k](u^*) \neq 0$,
\item[\bf (H3):\ ] $A \in C^{\infty}(\Omega;\mathbb{R}^{n\times n})$ and there exists $h \in Lie\{r_1,\cdots,r_{m-1},r_{m+1},\cdots,r_n\},$
    such that $ \nabla \la_m (u^*) \cdot h(u^*) \neq 0$,
\item[\bf (H4):\ ] $A \in C^{\infty}(\Omega;\mathbb{R}^{n\times n})$ and $\{ h(u^{*}), \ h \in Lie\{r_1,\cdots,r_{m-1},r_{m+1},\cdots,r_n\} \}= \R^{n}$
and $u^{*}$ is in the closure of $\{ u \in \Omega : \ \lambda_{m}(u) \not =0 \}$.
\end{itemize}

\end{prop}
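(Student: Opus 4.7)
Given $\vep > 0$, in each case the plan is to construct controls $\alpha$ with $\|\alpha\|_{L^\infty(0,1;\R^{n-1})} \leq \vep$ whose trajectory endpoint $z(1)$ of \e{SysControleDimFinie} satisfies $\la_m(z(1)) \neq 0$, exploiting $\la_m(u^*) = 0$ and expanding $z(1)-u^*$ in the size of the control. Case \textbf{(H1)}: pick the index $j \neq m$ from the hypothesis and set $\alpha_j \equiv \vep$, $\alpha_k \equiv 0$ for $k \neq j,m$. A direct Taylor expansion in $\vep$ of the ODE gives
\begin{equation*}
z(1) = u^* + \vep\, r_j(u^*) + O(\vep^2),
\end{equation*}
hence $\la_m(z(1)) = \vep\, \nabla \la_m(u^*) \cdot r_j(u^*) + O(\vep^2) \neq 0$ for small $\vep$. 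Case \textbf{(H2)}: use the classical four-pulse piecewise constant control, $\alpha_j = 4t$ on $[0,1/4]$, $\alpha_k = 4t$ on $[1/4,1/2]$, $\alpha_j = -4t$ on $[1/2,3/4]$, $\alpha_k = -4t$ on $[3/4,1]$ (all other components zero). The composition of the four subflows equals $\Phi_{r_k}^{-t} \circ \Phi_{r_j}^{-t} \circ \Phi_{r_k}^t \circ \Phi_{r_j}^t(u^*) = u^* + t^2 [r_j,r_k](u^*) + O(t^3)$; taking $4t = \vep$ and invoking the hypothesis on the bracket gives the result.

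Case \textbf{(H3)} is the bracket analogue at arbitrary depth (requiring $A\in C^\infty$): for any $h \in Lie\{r_j : j\neq m\}$ of formal depth $p$, one constructs piecewise constant controls with $\|\alpha\|_{L^\infty} \leq Ct$ whose endpoint obeys
\begin{equation*}
z(1) = u^* + t^p h(u^*) + O(t^{p+1}).
\end{equation*}
This can be done either by induction on $p$ extending the four-pulse scheme of (H2) by nesting commutators, or by invoking the bracket-realization theorem of Sussmann. Evaluating $\la_m$ yields $\la_m(z(1)) = t^p \nabla\la_m(u^*) \cdot h(u^*) + O(t^{p+1}) \neq 0$ for $t$ sufficiently small, and choosing $t$ so that $Ct \leq \vep$ concludes.

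Case \textbf{(H4)} requires a different idea, because $\nabla\la_m(u^*)$ may vanish and then no single $h$ is detected by the scalar $\nabla\la_m(u^*) \cdot h(u^*)$. Instead, invoke Chow's theorem for symmetric distributions: since $\{h(u^*) : h \in Lie\{r_j : j\neq m\}\} = \R^n$, for every $\vep > 0$ the set of endpoints of \e{SysControleDimFinie} reachable from $u^*$ with $\|\alpha\|_{L^\infty} \leq \vep$ contains a neighborhood $V_\vep$ of $u^*$ (the ODE being symmetric since the $\alpha_j$'s can change sign). The closure hypothesis then provides $u_1 \in V_\vep$ with $\la_m(u_1) \neq 0$, and the associated $\alpha$ concludes. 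The main obstacle is case (H3): the systematic higher-order bracket approximation with controlled error is algebraically heavy though standard in geometric control theory, and one must justify the scaling $\|\alpha\|_{L^\infty} = O(t)$ uniformly in the depth $p$. Case (H4), by contrast, is short once Chow's theorem is invoked, but it crucially combines both the bracket-generating condition (for reachability) and the density condition on $\{\la_m \neq 0\}$; without the latter neither bracket-generating alone would suffice.
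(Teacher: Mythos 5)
Your proof is correct and follows essentially the same route as the paper: expansion of $z(1)-u^*$ in the control size to detect a first- or higher-order Lie-bracket direction handles (H1)--(H3) (the paper streamlines this by observing (H1), (H2) $\Rightarrow$ (H3) and then quoting a lemma of Haynes--Hermes for the $p$-fold bracket realization, where you sketch the nested four-pulse construction or invoke Sussmann), and the Chow--Rashevski theorem handles (H4). Your closing observation that (H4) genuinely requires both the bracket-generating condition and the density of $\{\la_m \neq 0\}$, because $\nabla\la_m(u^*)$ may vanish, correctly identifies why the (H3) argument does not subsume (H4).
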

Here $Lie\{r_1,\cdots,r_{m-1},r_{m+1},\cdots,r_n\}$ denotes the Lie algebra generated by the smooth vector fields $r_1$,\dots,$r_{m-1}$,$r_{m+1}$,\dots,$r_n$.
\begin{proof}[Proof of Proposition \ref{Sufficient}]
It is a consequence of Chow and Rashevski's connectivity Theorem (see for instance \cite[Theorem 3.19, p. 135]{Coron}) that (H4) implies (H).
Next we notice that both (H1) and (H2) clearly imply (H3). So we have left to prove that (H3) implies (H).
From (H3) we deduce that the exists a direction $b \in \R^{n}$ obtained by $p$ successive Lie brackets and such that $\nabla \lambda_{m}(u^{*}) \cdot h \not =0$.
We use \cite[Lemma 1, p. 456]{Haynes-Hermes} to deduce that there are controls $\alpha$ which are arbitrarily small in $L^{\infty}$ norm such that the corresponding solution of \eqref{SysControleDimFinie}
satisfies $z(4^p t^{1/(1+p)})=u^{*}+ tb + o(t)$ as $t \rightarrow 0$. The conclusion follows.
\end{proof}
\begin{rem}
Theorem \ref{main-thm} can be regarded as a local boundary
controllability result because one can drive any initial data $\vph$
to any desired data $\psi$ near $u=u^*$ without using any internal
controls. However, since the characteristic speed $\la_m$ may change its sign
during the control period, it is difficult to describe the exact
distribution of boundary controls. To overcome this difficulty, we consider the system without boundary conditions (which is consequently under-determined), and aim at finding the solution $u$ itself. In the conservative case (where $A(u)$ is a Jacobian matrix $Df(u)$), the solution that we determine can enter the general theory of initial-boundary problems for systems of conservation laws, see in particular Amadori \cite{Amadori} and Amadori and Colombo \cite{AmadoriColombo}.
\end{rem}
\begin{rem}
Due to the hypothesis (H), one can drive the possible vanishing
characteristic speed $\la_m$ to be nonzero after sufficiently long time by
only using boundary controls. However, if some characteristic speeds of the
system are identically zero, the approach of this paper is not valid
anymore. Is boundary controllability possible in such cases, even
for some special models? Up to our knowledge, this question
remains open.
\end{rem}
\begin{rem}
We could treat the case where $A \in C^{1}(\Omega;\R^{n\times n})$, see in particular Remark \eqref{Rem31} below.
\end{rem}
The main idea to prove Theorem \ref{main-thm} is to use a
constructive approach and the return method \cite{CoronDF}.
In our framework the method consists in constructing a trajectory $\ov u\in
C^2([0,T]\times [0,L]; \mathbb{R}^n)$ of the system \e{sys}, close to ${u}^{*}$ such
that
\begin{equation}
\ov u(0,x)=\ov u(T,x)=u^*,\quad \forall x\in [0,L], \end{equation}
and that the linearized equation around $\overline{u}$ is controllable. Note indeed that the linearized equation around $u^{*}$ is not controllable.
Based on this, we can construct a solution $u\in C^1([0,T] \times
[0,L]; \mathbb{R}^n)$ to the system \e{sys} which connects the
initial and final data (which have to be sufficiently close to $u^{*}$).

As a matter of fact, we will quite not use the linearized equation. Instead, we use an argument of perturbation
of the trajectory $\ov u$ and then reduce the original control problem to a boundary control problem without vanishing
characteristic speeds, which has been solved by Li and Rao \cite{LiRao}. In the framework of systems of conservation laws, the return method has also been used in \cite{Chapouly,Coron2,Glass,Horsin}, see also \cite{AnconaMarson}. For other applications of the return method, see \cite{Coron} and the references therein. \par
Without loss of generality, we may assume the equilibrium $u^*$ to
be 0, replacing $u$ by $u-u^*$ as the unknown in the
system \e{sys} if necessary. For the convenience of statement, we denote by $C$
various positive constants in the whole paper which may change from one line to another.

The organization of this paper is as follows: in Section 2 we
construct the special trajectory $\ov u\in C^2([0,T] \times [0,L];
\mathbb{R}^n)$ of the system \e{sys} which starts at $0$ and returns
to $0$, and such that the equation linearized around $\overline{u}$ is controllable.
Then we prove the main result, Theorem \ref{main-thm}, in
Section 3. Some important applications are shown in Section
4, including Saint-Venant equations (shallow water equations), 1-D isentropic gas dynamics
equations, 1-D full gas dynamics equations and Aw-Rascle model on traffic flow and its generalization.
Finally in Appendix A, we establish a technical result.
\section{Construction of the trajectory $\ov u$}
\begin{defn} \label{defn 2.1}
Let $j\in \{1,\cdots,n\}$ and $u^0\in \Omega$. Let $s \in
[-\varepsilon_0,\varepsilon_0] \mapsto U_j(s) \in \Omega$ be the
orbit of the eigenvector field $r_j$ starting at $u^0$ (or rarefaction curves):
\begin{equation}
\f {d U_j}{ds}=r_j(U_j),\quad U_j(0)=u^0,
\end{equation}
where $\varepsilon_0>0$ is a small constant. Let $\Phi_j(s,\cdot)$ be the
corresponding flow map when $s$ varies, i.e.,
\begin{equation}
\Phi_j(s,u^0) := U_j(s),
\quad \forall s\in [\varepsilon_0,\varepsilon_0]. \end{equation}
\end{defn}

\begin{rem} For all $ s \in [-\varepsilon_0,\varepsilon_0]$ one has $u^+=\Phi_j(s,u^-)$  $\Longleftrightarrow$ $u^-=\Phi_j(-s,u^+)$.
\end{rem}
Our first proposition concerns simple waves which one can use to modify the state in $[0,L]$. \par
\begin{prop} \label{prop 2.1}
Let $j \in \{1,\cdots,n\} \setminus \{ m \}$ and
\begin{equation} \label{T>la_j-0}
T> \f {L}{|\la_j(0)|}.
\end{equation}
There exist $C>0$ and $\vep_0>0$, such that for all $\vep \in (0,\vep_0]$,
all $u^-,u^+ \in \Omega$ satisfying
\begin{equation}
\label{u-,u+}
|u^-|,|u^+|\leq \vep \text{ and }
u^+=\Phi_j(\overline{s},u^-) \text{ for some } \overline{s} \text{ such that } |\overline{s}|\leq \vep,
\end{equation}
there exists $u\in C^2([0,T]\times \mathbb{R}; \mathbb{R}^n)$ such that
 \begin{align} \label{sys-1}
 & \pd ut+A(u)\pd ux=0,\quad \forall (t,x)\in [0,T] \times \mathbb{R},
   \\\label{ini-u-}
 &u(0,x)= u^-,\quad \forall x\in [0,L],
   \\\label{fin-u+}
 & u(T,x)=u^+,\quad \forall x\in [0,L],
   \\\label{u-C1-R}
 & \|u(t,\cdot)\|_{C^1(\mathbb{R})} \leq C \vep,
 \quad\forall t\in [0,T].
\end{align}
\end{prop}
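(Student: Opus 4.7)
My plan is to realize $u$ as a smooth $j$-simple wave along the integral curve $U_{j}$: I look for $u$ of the form $u(t,x)=U_{j}(\sigma(t,x))$ for some scalar $\sigma\in C^{2}([0,T]\times\mathbb{R})$. Using $U_{j}'=r_{j}(U_{j})$ together with $A(U_{j})r_{j}(U_{j})=\lambda_{j}(U_{j})r_{j}(U_{j})$, the PDE \eqref{sys-1} reduces to the scalar quasilinear equation $\sigma_{t}+\mu(\sigma)\sigma_{x}=0$, where $\mu(s):=\lambda_{j}(U_{j}(s))$. Since $\lambda_{j}(0)\neq 0$ (because $j\neq m$) and $T>L/|\lambda_{j}(0)|$, there is strict room to produce such a $\sigma$ that equals $0$ on $\{0\}\times[0,L]$ and $\bar{s}$ on $\{T\}\times[0,L]$.

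To make this concrete, assume $\lambda_{j}(0)<0$ (the opposite sign is symmetric). I fix $\eta>0$ so small that $(|\lambda_{j}(0)|-\eta)T>L+3\eta$, choose $\alpha:=L+\eta$, $\beta:=(|\lambda_{j}(0)|-\eta)T-\eta$, and set $c_{0}:=\beta-\alpha>\eta$, a constant depending only on $T$, $L$ and $\lambda_{j}(0)$. I then fix once and for all a $C^{2}$ nondecreasing template $G:\mathbb{R}\to[0,1]$ with $G\equiv 0$ on $(-\infty,\alpha]$ and $G\equiv 1$ on $[\beta,+\infty)$, and put $g(\xi):=\bar{s}\,G(\xi)$, so that $\|g'\|_{\infty}\leq C\varepsilon/c_{0}$ and $\|g''\|_{\infty}\leq C\varepsilon/c_{0}^{2}$. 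Next I define $\sigma$ by propagating $g$ along the characteristics of $\mu$, i.e., by the relation $\sigma(t,\xi+t\mu(g(\xi)))=g(\xi)$. Since $|t\,g'(\xi)\,\mu'(g(\xi))|\leq C T\varepsilon$, for $\varepsilon_{0}$ small the map $\xi\mapsto\xi+t\mu(g(\xi))$ is a $C^{2}$-diffeomorphism of $\mathbb{R}$ for every $t\in[0,T]$, so $\sigma\in C^{2}([0,T]\times\mathbb{R})$; a direct computation then yields $\sigma_{t}+\mu(\sigma)\sigma_{x}=0$, hence $u=U_{j}(\sigma)$ is a $C^{2}$ solution of \eqref{sys-1} on $[0,T]\times\mathbb{R}$.

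It remains to verify the traces and the norm. At $t=0$, one has $\sigma(0,x)=g(x)=0$ on $[0,L]\subset(-\infty,\alpha]$, whence $u(0,\cdot)=U_{j}(0)=u^{-}$, proving \eqref{ini-u-}. For $t=T$ and $x\in[0,L]$, the unique preimage $\xi$ satisfies $\xi=x-T\mu(\sigma(T,x))$; since $|\mu(s)-\lambda_{j}(0)|\leq C\varepsilon_{0}\leq\eta$ for $|s|\leq\varepsilon_{0}$, we get $-T\mu(\sigma(T,x))\geq (|\lambda_{j}(0)|-\eta)T$, hence $\xi\geq(|\lambda_{j}(0)|-\eta)T>\beta$, so $\sigma(T,x)=g(\xi)=\bar{s}$ and $u(T,\cdot)=u^{+}$, giving \eqref{fin-u+}. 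Finally, \eqref{u-C1-R} follows from $|u|\leq|u^{-}|+C|\sigma|\leq C\varepsilon$ together with $|u_{t}|+|u_{x}|\leq C(|\sigma_{t}|+|\sigma_{x}|)\leq C\|g'\|_{\infty}\leq C\varepsilon/c_{0}$.

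The main technical point I foresee is the global, uniform non-degeneracy of the characteristic map on $[0,T]\times\mathbb{R}$: this is what allows the implicit relation for $\sigma$ to be inverted smoothly everywhere, and it dictates that $\varepsilon_{0}$ must be chosen small relative to the fixed constant $c_{0}$. The strict inequality $T>L/|\lambda_{j}(0)|$ is used precisely to secure that $c_{0}>0$ in the construction of the profile $g$.
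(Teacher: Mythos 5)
Your proof is correct, and it constructs the same object as the paper — a $j$-simple wave connecting $u^-$ to $u^+=\Phi_j(\overline{s},u^-)$ on $[0,L]$ in time $T$ — but by a more explicit route. You impose the simple-wave ansatz $u=U_j(\sigma)$ at the outset, which collapses the $n\times n$ quasilinear system to the scalar conservation law $\sigma_t+\mu(\sigma)\sigma_x=0$ with $\mu=\lambda_j\circ U_j$, and you then solve this explicitly by the method of characteristics, the key technical point being that the characteristic map $\xi\mapsto\xi+t\mu(g(\xi))$ stays a global $C^2$-diffeomorphism on $[0,T]$ once $\vep_0$ is small relative to the fixed constants $c_0,\eta$. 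The paper instead prescribes the same kind of initial profile $\varphi$ (constant $u^-$ on $(-\infty,L]$, a flow-layer of $\Phi_j$ of width $\eta$, and constant $u^+$ beyond), invokes the classical well-posedness theory for quasilinear hyperbolic Cauchy problems, and obtains the needed uniform $C^1$ a priori bound from the Riccati-type characteristic ODEs for $w_i=l_i(u)u_x$; there the simple-wave structure appears a posteriori, as the observation that $w_i(0,\cdot)\equiv 0$ for $i\neq j$ is propagated, leaving only the scalar Riccati equation for $w_j$ to estimate. The two arguments exploit the strict inequality $T>L/|\lambda_j(0)|$ identically (room for the transition layer to sweep across $[0,L]$); your version buys a self-contained, formula-level construction that avoids the abstract Cauchy theory and the $\beta_{ikl},\gamma_{ikl}$ machinery, while the paper's version is the one that generalizes smoothly to the perturbation lemma of Section~3, where the simple-wave form is lost but the Riccati estimates still apply.
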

\begin{proof}
Without loss of generality, we may assume that $j\in
\{1,\cdots,m-1\}$ (the case where $j\in \{m+1,\cdots,n\}$ can be
treated similarly by symmetry in $x$, that is, replacing $x$ by $L-x$ if necessary).

In view of \e{eigenvalue} and \e{T>la_j-0}, there exist $\vep_1>0$
and $\eta>0$ small enough such that
\begin{equation}
\label{T>la_j-eta}
   T>\max_{|u|\leq \vep_1}\f {L+\eta}{|\la_j(u)|}.
\end{equation}

Let $\vep \in(0,\vep_1]$ and $u^-,u^+ \in \Omega$ be such that
\e{u-,u+} holds. By Definition \ref{defn 2.1}, it is easy to see
that
\begin{equation}
\label{Phi-C-delta}
    |\Phi_j(s,u^-)|\leq C \vep,
    \quad \forall s\in [-|\overline{s}|,|\overline{s}|].
\end{equation}
Let $\beta\in C_0^{\infty}((0,1); \mathbb{R})$ be such that
\begin{equation}
 \int_0^1 \beta(\theta) d \theta=1.
\end{equation}
Then we let
\begin{equation}
\label{over-beta}
\ov \beta(\theta) := \f {\overline{s}}{\eta}\, \beta(\f \theta{\eta}),
\end{equation}
which gives that  $\ov \beta \in C_0^{\infty}((0,\eta); \mathbb{R})$ and
\begin{equation}
 \int_0^{\eta}\ov \beta(\theta)d \theta =\overline{s}.
\end{equation}
From the above, the ordinary differential equation
\begin{equation}
\label{y(s)}
\f {dy}{d \theta}=\ov \beta(\theta)r_j(y),\quad y(0)=u^{-},
\end{equation}
admits a unique solution $y(\cdot)=\Phi_j(\sigma(\cdot),u^{-}) \in C^2([0,\eta]; \mathbb{R}^n) $, where
\begin{equation}
\sigma(s):=\int_0^s\ov \beta(\theta)d\theta,
   \quad  \forall s\in [0,\eta].
\end{equation}
Let
\begin{equation}
\label{vph}
 \vph(x):=
 \begin{cases}
 u^-, &x \leq L,\\
 y(x-L), & L < x < L+\eta,\\
 u^+, &x \geq L+\eta.
 \end{cases}
\end{equation}
In the following, we will denote by $C^{k}(\R)$ the space of functions of class $C^{k}$ whose derivatives up to order $k$ are bounded on $\R$ (and the norm $\| \cdot \|_{C^{k}(\R)}$ is in fact the norm $\| \cdot \|_{W^{k,\infty}(\R)}$ ). \par
Then by \e{Phi-C-delta}, \e{over-beta}, \e{y(s)} and \e{vph}, we
obtain that
\begin{align} \label{vph-C0-R}
 &\|\vph\|_{C^0(\mathbb{R})}
   := \sup_{x\in \mathbb{R}} |\vph(x)|\leq C \vep,
    \\\label{vph'-C0-R}
 &\|\vph'\|_{C^0(\mathbb{R})}
   :=\sup_{x\in \mathbb{R}} |\vph'(x)|\leq C \frac{\overline{s}}{\eta}
   \leq  C\vep.
 \end{align}
Now we focus on the Cauchy problem of \e{sys-1} on $\R$ with the initial condition
\begin{equation}
\label{ini-vph}
    u(0,x)=\vph(x),\quad \forall x\in \mathbb{R}. \end{equation}
It is classical that there exists a unique $C^2$ solution to the Cauchy problem \eqref{sys-1} and \eqref{ini-vph}
in small time; see for instance \cite[p. 55]{Hormander}.
Let us prove that: for the fixed time $T>0$, if $\vep $ is sufficiently small, the
Cauchy problem \e{sys-1}, \e{ini-vph} admits a unique solution $u\in
C^2([0,T]\times \mathbb{R}; \mathbb{R}^n)$ such that
\e{ini-u-} to \e{u-C1-R} hold. \par
To show that, it suffices to obtain a uniform a priori estimate of the solution in $C^{1}$ (see  \cite[Theorem 4.2.5, p. 55]{Hormander}).
In order to obtain such an a priori estimate, we
assume that the Cauchy problem \e{sys-1}, \e{ini-vph} admits already a
solution $u\in C^2([0,T_0]\times \mathbb{R}; \mathbb{R}^n)$ for some
$T_0\in (0,T)$. \par
For any $i\in \{1,\cdots,n\}$  and any point $(t,x) \in [0,T_0]
\times \mathbb{R}$, we can define the $i-th$ characteristic curve
$\xi=\xi_i(\tau)$ passing through $(t,x)$ by
\begin{equation}
\f {d \xi}{d\tau}=\la_i(u(\tau,\xi)),\quad \xi(t)=x.
\end{equation}
%
%
Introducing
\begin{equation}
\label{vi-wi}
   v_i:=l_i(u)u,\ w_i:=l_i(u)\pd ux,\quad \i1n,
\end{equation}
i.e.,
\begin{equation}
u=\sum_i v_i r_i(u),\ \pd ux=\sum_i w_i r_i(u),
\end{equation}
we know that $v_i, w_i\ (\i1n)$ satisfy the following (see \cite[p. 47ff]{Hormander} and \cite{John}):
 \begin{align} \label{dvi-dit}
 \f {d v_i}{d_i t}&=\sum_{j,k} \beta_{ikl}(u) v_k w_l,\quad \i1n,
   \\ \label{dwi-dit}
 \f {d w_i}{d_i t}&=\sum_{k,l} \gamma_{ikl}(u) w_k w_l, \quad \i1n,
 \end{align}
where
\begin{equation}
 \f {d }{d_i t}:= \pd {}t +\la_i(u) \pd {}x
\end{equation}
denotes the derivative along the $i$-th characteristic,
and where $\beta_{ikl},\gamma_{ikl}\in C^1(\Omega; \mathbb{R}^n)$ satisfy in particular
 \begin{align}\label{gamma-ikk}
 \gamma_{ikk}(u)&=0,\quad \forall i,k\in \{1,\cdots,n\},k\neq i,
  \\\label{gamma-iii}
 \gamma_{iii}(u)&=-\nabla \la_i(u) r_i(u),\quad \i1n.
 \end{align}
By \e{gamma-ikk}-\e{gamma-iii}, \e{dwi-dit} can be written as
\begin{equation}
 \label{dwi-dit-2}
   \f {d w_i}{d_i t}=\sum_{k\neq l} \gamma_{ikl}(u) w_k w_l
     - ( \nabla \la_i(u) r_i(u) )w_i^2, \quad \i1n.
\end{equation}
Combining \e{vph}-\e{vph'-C0-R} and (2.22), noticing \e{li-rj}, we have
 \begin{align}\label{vi-C-delta}
 & |v_i(0,x)|=|l_i(\vph(x))\vph(x)|\leq C\vep,
   \quad \forall x\in \mathbb{R},\forall i\in \{1,\cdots,n\},
 \\\label{wi-0}
 & w_i(0,x)=l_i(\vph(x))\vph'(x)=0,
   \quad \forall x\in \mathbb{R}, \forall i\in \{1,\cdots,n\} \setminus \{ j \},
   \\\label{wj-C-delta}
 &|w_j(0,x)|=|l_j(\vph(x))\vph'(x)| \leq C\vep,\quad \forall x\in
 \mathbb{R}.
 \end{align}
As in the proof of \cite[Theorem 4.2.5, p. 55]{Hormander},
we first assume that
\begin{equation}
\label{vi-wi-1}
  |v_i(t,x)|\leq 1,\ |w_i(t,x)|\leq 1, \quad \forall (t,x)\in [0,T_0] \times \mathbb{R}, \forall i\in \{1,\cdots,n\}.
\end{equation}
By \e{dwi-dit-2} when $i\neq j$ and \e{wi-0} , we deduce that
\begin{equation}
\label{wi-0-T0}
   w_i(t,x)=0,\quad \forall (t,x)\in [0,T_0] \times \mathbb{R}, \
   \forall i\in \{1,\cdots,n\} \setminus \{ j \},
\end{equation}
which then reduces \e{dwi-dit-2} when $i=j$ to
\begin{equation}
 \label{dwj-dit}
    \f {d w_j}{d_j t}=- ( \nabla \la_j(u) r_j(u)) w_j^2.
\end{equation}
By comparing the norm of the solution \e{dvi-dit} and \e{dwj-dit}
to the solution of the ordinary differential equation $\dot{x}= \| \nabla \lambda_{j}.r_{j}\| x^{2}$,
noticing also \e{vi-C-delta} and \e{wj-C-delta}, we deduce
that the following estimates hold
 \begin{align}\label{vi-C-delta-T0}
 & |v_i(t,x)|\leq  C \vep,\quad \forall (t,x)\in [0,T_0] \times \mathbb{R}, \ \forall i\in \{1,\cdots,n\},
  \\\label{wj-C-delta-T0}
 &|w_j(t,x)| \leq C \vep,\quad \forall (t,x)\in [0,T_0] \times \mathbb{R}.
 \end{align}
Combining \e{wi-0-T0} and \e{vi-C-delta-T0}-\e{wj-C-delta-T0}, there
exists $\vep_0 \in (0,\vep_1]$ small enough such that for any $\vep
\in (0,\vep_0]$ the assumption \e{vi-wi-1} is indeed satisfied,
and the uniform a priori estimate
\begin{equation}
 \|u(t,\cdot)\|_{C^1(\mathbb{R})}\leq C\vep,
   \quad \forall t\in [0,T_0] \end{equation}
holds for all $T_0\in (0,T)$. This proves the existence of
the solution $u\in C^2([0,T]\times \mathbb{R}; \mathbb{R}^n)$ (see again \cite[Theorem 4.2.5, p. 55]{Hormander}).
Moreover, since \e{T>la_j-eta} implies
\begin{equation}
\label{T>la_j-eta-u+}
   T> \f{L+\eta}{|\la_j(u^+)|},\end{equation}
we derive from the fact
\begin{equation}
w_i(0,x)= 0, \quad \forall x \in [L+\eta,\infty),
  \forall i\in \{1,\cdots,n\}  \end{equation}
that
\begin{equation}
w_i(T,x)=0, \quad  \forall x\in [0,\infty),
 \forall i\in \{1,\cdots,n\}, \end{equation}
which in turn implies that
\begin{equation}
u(T,x)=const.=\lim_{x\rightarrow \infty} u(T,x)=u^+,
 \quad \forall x\in [0,\infty).   \end{equation}

This concludes the proof of Proposition \ref{prop 2.1}.
\end{proof}

\begin{rem}
In the proof above, the information travels from right to left through the boundary $x=L$. For $i \in \{ 1, \dots, m-1\}$, the information would travel from left to right through the boundary $x=0$.
\end{rem}

\begin{rem}
Because \e{sys-1} is an autonomous  system, the
conclusion of Proposition \ref{prop 2.1} on $[0,T]$ can be achieved
on $[t_0,t_0+T]$ for any $t_0 \in \mathbb{R}$ by
translation in time.
\end{rem}
The next proposition prove that one can approximate the trajectory given by \eqref{SysControleDimFinie} by a trajectory composed of simple waves. \par
\begin{prop}\label{prop 2.4}
There exist $C>0$ and $\vep_0>0$ such that the following holds.
For any $\vep\in (0,\vep_0]$, any $\alpha=(\alpha_1,\cdots,\alpha_{m-1},\alpha_{m+1},\cdots,
\alpha_n) \in L^{\infty}(0,1;\mathbb{R}^{n-1})$ satisfying
\begin{equation}
\label{alpha-vep}
\|\alpha\|_{L^{\infty}(0,1;\mathbb{R}^{n-1})}\leq \vep,
\end{equation}
we consider $z\in C^0([0,1];\mathbb{R}^n)$ the solution to the ordinary
differential equation
\begin{equation}
\label{ode-z}
\frac{dz}{ds}=\sum_{j\neq m}\alpha_j(s)r_j(z),\quad z(0)=0,
\end{equation}
Then, for any $\eta>0$, there exist $p \in \N $, $i_1,\cdots,i_p \in\{1,\cdots,n\} \setminus \{m\}$ and
$t_1,\cdots,t_p\in \mathbb{R}$ such that
\begin{gather} \label{Tpetits}
\sum_{l=1}^{p} |t_l| \leq C \vep, \\
\label{ProcheZ1}
 |z(1)-\Phi_{i_p}(t_p, \cdot)\circ\cdots \circ \Phi_{i_2}(t_2,0) \, (\Phi_{i_1}(t_1,0)) | \leq \eta.
\end{gather}
\end{prop}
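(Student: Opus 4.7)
The plan is to approximate $\alpha$ in $L^1$ by piecewise constant controls on a fine partition of $[0,1]$ (while preserving the $L^\infty$ bound), and then on each subinterval to replace the joint flow of $\sum_{j\neq m}\alpha_j(s)r_j$ by a composition of the individual flows $\Phi_j(t,\cdot)$ via a Lie--Trotter type splitting. The sum of elementary times will automatically be controlled by the $L^1$ norm of $\alpha^N$, and hence by $C\vep$.

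Fix $\eta>0$. For a large integer $N$ to be chosen later, set $s_k:=k/N$, $\Delta:=1/N$, and let $\alpha^N_j$ take the constant value $c_{j,k}:=\Delta^{-1}\int_{s_{k-1}}^{s_k}\alpha_j$ on $[s_{k-1},s_k)$. Then $\|\alpha^N\|_{L^\infty}\leq\|\alpha\|_{L^\infty}\leq\vep$ and $\alpha^N\to\alpha$ in $L^1(0,1)$ as $N\to\infty$. Since the $r_j$ are $C^1$ and all trajectories stay in a fixed compact neighborhood of $0$ when $\vep$ is small, a standard Gronwall argument gives $\|z^N-z\|_{C^0([0,1])}\to 0$, where $z^N$ solves \e{ode-z} with $\alpha$ replaced by $\alpha^N$. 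Choose $N$ so large that $|z^N(1)-z(1)|\leq\eta/2$.

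On each subinterval $[s_{k-1},s_k]$ the function $z^N$ solves $\dot y=\sum_{j\neq m}c_{j,k}r_j(y)$. Fix once and for all an ordering $j_1,\ldots,j_{n-1}$ of $\{1,\ldots,n\}\setminus\{m\}$, and define
\begin{equation}
\Psi_k(y):=\Phi_{j_{n-1}}\bigl(c_{j_{n-1},k}\Delta,\cdot\bigr)\circ\cdots\circ\Phi_{j_1}\bigl(c_{j_1,k}\Delta,y\bigr).
\end{equation}
A second order Taylor expansion shows that both $z^N(s_k)$ and $\Psi_k(z^N(s_{k-1}))$ equal $z^N(s_{k-1})+\Delta\sum_{j\neq m}c_{j,k}r_j(z^N(s_{k-1}))$ up to an error bounded by $C\Delta^2\vep^2$. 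Because each $\Psi_k$ is Lipschitz with constant at most $1+C\vep\Delta$ on the compact neighborhood of $0$ under consideration, the $N$ per-step errors accumulate to at most $\exp(C\vep)\cdot CN\Delta^2\vep^2\leq C\vep^2\Delta$. Enlarging $N$ so that this quantity is $\leq\eta/2$ and applying the triangle inequality yields \e{ProcheZ1}.

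Finally, the concatenation $\Psi_N\circ\cdots\circ\Psi_1$ is itself a composition of $p:=N(n-1)$ individual flow maps $\Phi_{i_l}(t_l,\cdot)$ with $t_l$ of the form $c_{j,k}\Delta$, so
\begin{equation}
\sum_{l=1}^p|t_l|=\sum_{k=1}^N\sum_{j\neq m}|c_{j,k}|\Delta=\int_0^1\sum_{j\neq m}|\alpha^N_j(s)|\,ds\leq (n-1)\vep,
\end{equation}
which is \e{Tpetits}. The most delicate step is the splitting estimate: one must check that the $O(\Delta^2\vep^2)$ local error does not blow up when propagated through the remaining $N-k$ compositions. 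This is handled by the uniform Lipschitz bound on each $\Psi_k$ and the fact that $\vep$ is small, so the total amplification factor is an innocuous $\exp(C\vep)$; otherwise, the argument is routine.
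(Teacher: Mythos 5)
Your proof is correct but follows a genuinely different route from the one in the paper. The paper's Appendix~A argument works in two stages: it first establishes that the class $\mathcal{F}_{(0,1)}^{n-1}$ of piecewise-constant controls having \emph{at most one nontrivial component at a time} is weak-$*$ dense in $L^\infty(0,1;\mathbb{R}^{n-1})$ (Proposition A.2, proved by an explicit ``fast-oscillation'' construction), and then uses Arzel\`a--Ascoli together with weak-$*$ convergence of the controls to pass to the limit in the integrated ODE $z(s)=\int_0^s\sum_{j\neq m}\alpha_j r_j(z)$. The key point of that design is that a control in $\mathcal{F}$ immediately produces the flow composition $\Phi_{i_p}(t_p,\cdot)\circ\cdots\circ\Phi_{i_1}(t_1,\cdot)$ without any splitting error, so no quantitative error estimate is needed. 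You instead approximate $\alpha$ in $L^1$ (not weak-$*$) by generic piecewise-constant controls via local averaging, and then perform an explicit Lie--Trotter splitting on each mesh cell, with a per-step $O(\Delta^2\vep^2)$ error that you propagate through the Lipschitz bound $1+C\vep\Delta$ of each $\Psi_k$. Both give \eqref{Tpetits} and \eqref{ProcheZ1}. Two small remarks on the trade-offs. (i) Your splitting estimate implicitly uses second-order Taylor expansions of $\Phi_j$ and of $z^N$, which requires the $r_j$ to be (at least) $C^{1,1}$; this is fine here since $A\in C^2$ gives $r_j\in C^2$, but the paper's soft compactness argument would survive under the weaker $C^1$ hypothesis alluded to in Remark~1.3. (ii) Your argument is more constructive and yields the explicit constant $C=n-1$ in \eqref{Tpetits} together with a convergence rate in $N$, which the paper's compactness proof does not provide. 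Aside from making the $C^2$ requirement explicit, the proof is complete.
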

Proposition \ref{prop 2.4} will be established in Appendix A.
The next proposition, which establishes the existence of the special trajectory $\overline{u}$, is the principal of this section. \par
\begin{prop}\label{prop 2.5}
Let $K$ be a compact subset of $\Omega$.
There exist $C>0$ and $\vep_0>0$ such that the following holds.
For any $\vep\in (0,\vep_0]$, there exist $T>0$ and a state $\ov u^*\in K$ satisfying
\begin{equation} \label{la-i-over-u*}
\la_i(\ov u^*)\neq 0, \quad \forall i\in \{1,\cdots,n\},
\end{equation}
there exist $p \in \N$ and times $0=\tau_{0}< \tau_{1} < \dots < \tau_{2p+1}=T$ with
\begin{equation} \label{CondLiRao}
\tau_{p+1} - \tau_{p} > \displaystyle \max_{i=1 \dots n} \frac{L}{ |\lambda_{i}(\overline{u}^{*})|},
\end{equation}
and a function $\ov u \in L^{\infty}((0,T)\times \R; \mathbb{R}^n)$ such that
\begin{gather}
\label{RegInterieure}
 \overline{u}_{|[\tau_{l-1},\tau_{l}] \times \R} \in C^{2}( [\tau_{l-1},\tau_{l}] \times \R; \mathbb{R}^n), \ \forall l \in \{ 1, \dots, 2p+1 \}, \\
\label{RegInterieure2}
\overline{u}_{|[0,T] \times [0,L]} \in C^{2}([0,T]\times [0,L]; \mathbb{R}^n), \\
\label{sys-over-u}
\pd {\ov u}t+A(\ov u)\pd {\ov u}x=0,  \text{ for } (t,x) \text{ in each } [\tau_{l-1},\tau_{l}] \times \R, \ \forall l \in \{ 1, \dots, 2p+1 \}, \\
\nonumber
\ov u(0,x)=\ov u(T,x)=0,\quad  \forall x\in [0,L],\\
\nonumber
 \ov u(t,x)=\ov u^*, \quad \forall t\in [\tau_{p},\tau_{p+1}],\forall x\in [0,L], \\
\label{over-u-C-vep}
 \|\ov u (t,\cdot)\|_{C^1(\R)} \leq C\vep, \  \text{ for } t \text{ in each } [\tau_{l-1},\tau_{l}], \ \forall l \in \{ 1, \dots, 2p+1 \}.
\end{gather}
\end{prop}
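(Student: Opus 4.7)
The plan is to build $\overline{u}$ by concatenating simple-wave transitions that trace the finite-dimensional trajectory furnished by (H): we first climb from $0$ up to a state $\overline{u}^{*}$ where $\lambda_{m}$ is nonzero, remain there long enough to realize \eqref{CondLiRao}, and then retrace the path back to $0$. To start, apply (H) with a parameter $\varepsilon' > 0$ to be fixed, obtaining a control $\alpha$ whose solution $z$ of \eqref{SysControleDimFinie} ends with $\lambda_{m}(z(1)) \neq 0$. Proposition \ref{prop 2.4}, used with an auxiliary $\eta > 0$, then yields indices $i_{1}, \ldots, i_{p} \in \{1, \ldots, n\} \setminus \{m\}$ and reals $t_{1}, \ldots, t_{p}$ with $\sum_{l} |t_{l}| \leq C \varepsilon'$ such that
\[
\overline{u}^{*} := \Phi_{i_{p}}(t_{p}, \cdot) \circ \cdots \circ \Phi_{i_{1}}(t_{1}, 0)
\]
lies within $\eta$ of $z(1)$. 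Choosing $\eta$ (and hence $\varepsilon'$) small and invoking continuity of the eigenvalues places $\overline{u}^{*} \in K$ and forces $\lambda_{i}(\overline{u}^{*}) \neq 0$ for every $i$, since also $\lambda_{i}(0) \neq 0$ for $i \neq m$.

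Set $u_{0} := 0$ and $u_{l} := \Phi_{i_{l}}(t_{l}, u_{l-1})$ for $1 \leq l \leq p$; then $u_{p} = \overline{u}^{*}$ and $|u_{l}| \leq C \varepsilon'$. For each $l$ we apply Proposition \ref{prop 2.1} with $j = i_{l}$, $u^{-} = u_{l-1}$, $u^{+} = u_{l}$ on a slab $[\tau_{l-1}, \tau_{l}] \times \R$ of duration slightly above $L/|\lambda_{i_{l}}(0)|$, producing a $C^{2}$ solution matching $u_{l-1}$ at $\tau_{l-1}$ and $u_{l}$ at $\tau_{l}$ on $[0,L]$, with uniform $C^{1}$-norm at most $C \varepsilon'$. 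On the middle slab $[\tau_{p}, \tau_{p+1}]$ we set $\overline{u} \equiv \overline{u}^{*}$, choosing $\tau_{p+1} - \tau_{p}$ just above $\max_{i} L/|\lambda_{i}(\overline{u}^{*})|$ to realize \eqref{CondLiRao}. The return half uses the identity $u_{l-1} = \Phi_{i_{l}}(-t_{l}, u_{l})$ from the remark after Definition \ref{defn 2.1}: on slab $[\tau_{p+l}, \tau_{p+l+1}]$ ($l = 1, \ldots, p$) we apply Proposition \ref{prop 2.1} once more with family $i_{p+1-l}$ to pass from $u_{p+1-l}$ back to $u_{p-l}$. After $p$ such slabs we have returned to $u_{0} = 0$, and $T := \tau_{2p+1}$ completes the construction.

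The main technical point will be to upgrade this piecewise $C^{2}$ concatenation into a function that is $C^{2}$ on the whole cylinder $[0, T] \times [0, L]$, as demanded by \eqref{RegInterieure2}. The key observation is that in the proof of Proposition \ref{prop 2.1} the perturbation in the initial data is compactly supported in $[L, L+\eta]$ and the resulting wave propagates in the single characteristic direction $\lambda_{i_{l}}$, which is bounded away from zero. Hence within time at most $(L+\eta)/|\lambda_{i_{l}}|$ the wave sweeps entirely across $[0,L]$, after which the solution is identically $u_{l}$ on $[0,L]$; symmetrically, it is identically $u_{l-1}$ on $[0,L]$ for small $t - \tau_{l-1}$. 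By enlarging each $\tau_{l} - \tau_{l-1}$ slightly beyond the minimum dictated by \eqref{T>la_j-0}, we obtain a time-neighborhood on each side of every interface $t = \tau_{l}$ on which $\overline{u} \equiv u_{l}$ on $[0, L]$; these constant segments glue $C^{\infty}$-smoothly, delivering \eqref{RegInterieure2}. The remaining items \eqref{RegInterieure}, \eqref{sys-over-u} and the $C^{1}$-bound \eqref{over-u-C-vep} follow directly from Proposition \ref{prop 2.1} on each non-trivial slab (after absorbing the constant from \eqref{Tpetits} into the relation between $\varepsilon$ and $\varepsilon'$) and from constancy on the middle slab.
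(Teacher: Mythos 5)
Your strategy is the same as the paper's: use Proposition \ref{prop 2.4} together with (H) to produce the endpoint $\overline{u}^{*}$ as a composition $\Phi_{i_p}(t_p,\cdot)\circ\cdots\circ\Phi_{i_1}(t_1,0)$, realize each step as a simple wave via Proposition \ref{prop 2.1}, hold $\overline{u}^{*}$ on a middle slab long enough for \eqref{CondLiRao}, and return symmetrically. The only structural difference is the return half: you re-invoke Proposition~\ref{prop 2.1} with the reversed transitions $u_{p+1-l}\to u_{p-l}$ using $u_{l-1}=\Phi_{i_l}(-t_l,u_l)$, whereas the paper exploits the invariance of \eqref{sys} under $(t,x)\mapsto(\tau-t,L-x)$ and simply reflects the forward pieces. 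Both are legitimate and lead to the same conclusion.

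There is, however, one inaccuracy in your treatment of the $C^2$-regularity \eqref{RegInterieure2}. You assert that $\overline{u}$ is ``identically $u_{l-1}$ on $[0,L]$ for small $t-\tau_{l-1}$,'' i.e.\ that $\overline{u}(t,\cdot)$ is constant on $[0,L]$ in a \emph{two-sided} time-neighborhood of each interface $\tau_l$. This is false on the incoming side of the interface: in Proposition~\ref{prop 2.1} the transition in the initial data occupies exactly $(L,L+\eta)$, so the wave front sits at $x=L$ at the beginning of the slab and enters $(0,L)$ immediately; hence $\overline{u}(t,\cdot)$ fails to be constant on $[0,L]$ for any $t$ slightly above $\tau_{l-1}$. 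The constancy is only available on the outgoing side, after the wave has crossed $[0,L]$ (this is where the slack $T_l>L/|\lambda_{i_l}(0)|$ is used). The $C^2$ gluing nevertheless holds, but for a different reason: since $\overline{\beta}\in C^\infty_0((0,\eta))$, the initial profile $\varphi$ from \eqref{vph} is flat to all orders at $x=L$ from the left, so that on $[0,L]$ the trace $\overline{u}^{l+1}(\tau_l,\cdot)$ and all its spatial derivatives vanish, and by the equation all mixed time-space derivatives up to order two vanish as well; on the other side of the interface the piece is literally constant. Either replace your argument by this derivative-matching observation, or modify the construction to place the transition region in $(L+\delta,L+\eta+\delta)$ for some $\delta>0$, which then genuinely yields a two-sided constant buffer.
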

\begin{proof}
By Proposition \ref{prop 2.4} and the hypothesis (H), we can deduce that there
exist $C>0$ and $\vep_1>0$ such that for any $\vep\in (0,\vep_1]$,
one can find $p \in \N$ and $i_1,\cdots,i_p \in\{1,\cdots,n\} \setminus \{ m \}$,
$t_1,\cdots,t_p\in \mathbb{R}$ such that \eqref{Tpetits} applies and
\begin{equation} \label{la-m-non-0}
\la_m(\Phi_{i_p}(t_p, \cdot)\circ\cdots \circ \Phi_{i_2}(t_2,\cdot)\circ (\Phi_{i_1}(t_1,0))) \neq 0.
\end{equation}
And thus
\begin{equation}
\label{la-i-non-0}
 \la_i(\Phi_{i_p}(t_p, \cdot)\circ\cdots \circ \Phi_{i_2}(t_2,\cdot)\circ  (\Phi_{i_1}(t_1,0)))\neq 0,\quad \forall i \in \{1,\cdots,n\}. \end{equation}
We let
\begin{equation*}
\overline{u}^{*}:=\Phi_{i_p}(t_p, \cdot)\circ\cdots \circ \Phi_{i_2}(t_2,\cdot)\circ (\Phi_{i_1}(t_1,0)).
\end{equation*}
Now for every $l\in \{1,\cdots,p\}$, let
\begin{equation} \label{T_l}
T_l := \f L{|\la_{i_l}(0)|} +1
\end{equation}
and in addition
\begin{equation} \label{T^l}
\left\{ \begin{array}{l}
\displaystyle \tau_{l}:=\sum_{k=1}^l T_k \text{ for } l=1, \dots ,p,  \smallskip \\
\displaystyle \tau_{p+1}:= \tau_{p} + \max_{i=1 \dots n} \frac{L}{ |\lambda_{i}(\overline{u}^{*})|} +1  \smallskip \\
\displaystyle \tau_{l}:= \tau_{p+1} + \sum_{k=2p+2-l}^{p} T_{k} \text{ for } l=p+2, \dots, 2p+1.
\end{array} \right.
\end{equation}
Observe that there is a symmetry with respect to the central time interval $[\tau_{p},\tau_{p+1}]$, that is, $[\tau_{p-1},\tau_{p}]$ is symmetric of $[\tau_{p+1},\tau_{p+2}]$, etc. \par
Applying Proposition \ref{prop 2.1} and Remark 2.2 with
\begin{equation*}
u_{-}=\Phi_{i_{l-1}}(t_{l-1}, \cdot)\circ\cdots \circ \Phi_{i_2}(t_2,\cdot)\circ (\Phi_{i_1}(t_1,0)) \text{ and } u_{+}=\Phi_{i_{l}}(t_{l}, \cdot)\circ\cdots \circ \Phi_{i_2}(t_2,\cdot)\circ (\Phi_{i_1}(t_1,0)),
\end{equation*}
for $l =1, \dots, p$, we deduce that provided that $\varepsilon_{0}$ is small enough, for any $\vep \in (0,\vep_0]$,
there exists $\ov u^l\in C^2([\tau_{l-1},\tau_{l}]\times \mathbb{R}; \mathbb{R}^n)$ such that
\begin{align}
&\pd {\ov u^l}t+A(\ov u^l)\pd {\ov u^l}x=0,   \quad  \forall(t,x)\in [\tau_{l-1},\tau_{l}] \times \mathbb{R},\\
 &\ov u^l(\tau_{l-1},x)=u_{-},\quad \forall x\in [0,L],\\
 & \ov u^l(\tau_{l},x)=u_{+},\quad \forall x\in [0,L],\\
 & \|\ov u^l(t,\cdot)\|_{C^1(\mathbb{R})} \leq C \vep,
 \quad\forall t\in [\tau_{l-1},\tau_{l}].
 \end{align}
Then, we let
\begin{equation}
\label{T^*} T:=\tau_{2p+1}.
\end{equation}
Finally, letting
 \begin{equation}
 \ov u(t,x):=
 \begin{cases}
  \ov u^l(t,x),  & (t,x) \in [\tau_{l-1},\tau_{l}] \times \R, l=1,\cdots,p,\\
  \ov u^*, &  (t,x)\in [\tau_{p},\tau_{p+1}] \times \R,\\
  \ov  u^{2p+1-l}(\tau_{l}-t,L-x),  & (t,x) \in [\tau_{l-1},\tau_{l}] \times \R, l=p+2,\cdots,2p+1.
 \end{cases}
 \end{equation}
we can see that $\ov u \in L^{\infty}((0,T)\times \R;\mathbb{R}^n)$
satisfies the required properties.
\end{proof}
%
%
%
%
%
%
\section{Proof of Theorem \ref{main-thm}}

In order to conclude the proof, we will use a perturbation argument together with a result by Li and Rao \cite{LiRao}.
First, we have the following perturbation result.


\begin{prop}\label{lem 3.1}
Consider $K \subset \Omega$ a nonempty compact subset.
Let $T>0$. For any $\wt u\in C^2([0,T]\times \mathbb{R}; K)$ satisfying
\begin{gather}
\label{sys-wt-u}
\pd {\wt u}t+A(\wt u)\pd {\wt u}x=0, \quad \forall (t,x)\in [0,T] \times \mathbb{R}, \\
\label{sys-wt-u0}
{\wt u}(0,x) = \tilde{\psi}(x)\quad  \forall x\in  \mathbb{R},
\end{gather}
there exist $\nu_{0}>0$ and $C>0$ such that for any $\nu \in (0,\nu_{0})$ and  any $\psi\in C^1(\R; \Omega)$ satisfying
\begin{equation}
\label{psi-u}
\|\psi(\cdot)-\tilde{u}(0,\cdot) \|_{C^1(\R)} \leq \nu,
\end{equation}
then the unique maximal solution $u \in C^1([0,T_{0}]\times \mathbb{R}; \Omega)$ of
\begin{gather}
\label{defu1}
\pd { u}t+A( u)\pd { u}x=0, \quad \forall (t,x)\in [0,T] \times \mathbb{R}, \\
\label{defu2}
 u(0,x)=\psi(x),\quad \forall x\in \R,
\end{gather}
is defined on $[0,T] \times \R$ and satisfies
\begin{gather}
\label{wt-u-C1-R}
\|u(t,\cdot) - \wt{u}(t,\cdot) \|_{C^1(\mathbb{R})} \leq C\nu, \quad\forall t\in [0,T].
\end{gather}
\end{prop}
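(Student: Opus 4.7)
The plan is to combine the classical local $C^1$ well-posedness for quasilinear hyperbolic systems (already invoked in the proof of Proposition~\ref{prop 2.1} via \cite[Theorem 4.2.5]{Hormander}) with a Gronwall-type a~priori estimate on the difference $v := u - \wt u$, closed by a standard continuation/bootstrap argument.

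First, I would fix a compact set $K' \subset \Omega$ containing a neighborhood of $K$, so that any $u$ sufficiently close to $\wt u$ in $C^{0}(\R)$ stays in $K'$. The local existence theorem yields a unique maximal $C^1$ solution $u$ of \eqref{defu1}--\eqref{defu2} on some $[0,T_{\max}) \times \R$, with the standard alternative that $T_{\max}$ can be reached only when $\|u(t,\cdot)\|_{C^1(\R)}$ blows up or $u$ leaves $K'$. I then introduce
\[
T_{0} := \sup \left\{ t \in [0, \min(T,T_{\max})] :  \|u(s,\cdot) - \wt u(s,\cdot)\|_{C^1(\R)} \leq 1 \text{ and } u(s, \R) \subset K',\ \forall s \in [0,t] \right\},
\]
and aim to show that, for $\nu_{0}$ small enough, the conditions defining $T_{0}$ are never saturated, so that $T_{0} = T$.

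Second, the main step is the a~priori estimate on $v$ for $t \in [0,T_{0})$. Subtracting the equations for $u$ and $\wt u$ yields
\[
v_{t} + A(u)\, v_{x} = -\bigl(A(u) - A(\wt u)\bigr)\, \wt u_{x},
\]
whose right-hand side is $O(|v|)$ since $A \in C^{2}$. Setting $V_{i} := l_{i}(u)\, v$ and $W_{i} := l_{i}(u)\, v_{x}$ and projecting this equation---and its $x$-derivative---onto the eigenbasis $\{l_{i}(u)\}$, in a computation parallel to the derivation of (2.21)--(2.22), I would obtain ODEs along the $k$-th characteristic of $u$ of the form
\[
\left|\f{dV_{k}}{d_{k}t}\right| + \left|\f{dW_{k}}{d_{k}t}\right| \leq C (|V| + |W|), \quad k = 1, \dots, n,
\]
with a constant $C$ depending only on $K'$, on $\|A\|_{C^{2}(K')}$, and on the uniform bounds for the first and second derivatives of $\wt u$ on $[0,T] \times \R$. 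Integrating along characteristics and applying Gronwall's lemma yields
\[
\|v(t,\cdot)\|_{C^1(\R)} \leq C \nu \, e^{Ct}, \quad \forall t \in [0,T_{0}).
\]

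Choosing $\nu_{0}$ such that $C \nu_{0} e^{CT} < 1/2$ keeps both $\|v(t,\cdot)\|_{C^1(\R)}$ bounded by $1/2$ and $u(t, \R)$ inside $K'$ on $[0,T_{0})$; the continuation criterion then forces $T_{0} = T$ and $T_{\max} > T$, which gives exactly \eqref{wt-u-C1-R}. The hard part will be the careful derivation of the ODEs for $V_{k}$ and $W_{k}$: after projection one must verify that all the extra terms coming from the $x$-derivatives of $l_{i}(u)$, $r_{i}(u)$, $A(u)$ and from the source $(A(u) - A(\wt u))\, \wt u_{x}$ can be written as linear combinations of $V$ and $W$ with coefficients bounded on $K'$ (using in particular $|A(u) - A(\wt u)| \leq C|v|$ together with the $C^{2}$ bounds on $\wt u$). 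Once this linear structure is in place, the Gronwall/bootstrap closure is routine.
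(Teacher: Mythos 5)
Your proposal is correct and follows essentially the same route as the paper: subtract the two equations to get a linear transport system for $v = u - \wt u$ with a source bounded by $C|v|$ (using $A \in C^2$ and the $C^2$ bound on $\wt u$), apply Gronwall at the $C^0$ level and then again after differentiating in $x$, and close via a continuation argument. The only difference is presentational: the paper applies Gronwall directly to $v$ and $v_x$ without spelling out the characteristic-variable projection, whereas you make the eigenbasis decomposition $V_i = l_i(u)v$, $W_i = l_i(u)v_x$ and the bootstrap set $T_0$ explicit, which fills in details the paper leaves implicit but is not a different method.
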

\begin{proof}
%
%
Given $\psi\in C^1(\R; \Omega)$, there exists a local in time solution $u \in C^{1}([0,T_{0}] \times \R)$ of \eqref{defu1}-\eqref{defu2}. We show in the same time that $u$ does not blow up before $T$ and that \eqref{wt-u-C1-R} holds. \par
For that, let us make the difference of \e{sys-wt-u} and \e{defu1}, we get
\begin{gather}
\label{EqDiff1}
\pd {}t ( u-\wt u) +A(u)\pd {}x ( u-\wt u)=  (A(\wt u)-A( u)) \pd {\wt u}x,  \quad \forall (t,x) \in [0,T] \times \mathbb{R},\\
\label{EqDiff2}
u(0,x)- \wt u(0,x)= {\psi}(x)-\wt \psi(x),\quad \forall  x\in \mathbb{R}.
\end{gather}
By Gronwall's inequality we deduce that
\begin{equation}
\label{wt-u-over-u}
   \| u(t,\cdot)-\wt u(t,\cdot)\|_{C^0(\mathbb{R})} \leq
   C \|\psi-\wt \psi\|_{C^0(\mathbb{R})} \leq C\nu.
   \quad \forall t\in [0,T].
\end{equation}
Differentiating  \eqref{EqDiff1} with respect to $x$ and observing that $\wt u$ is of class $C^{2}$, we can use the same Gronwall argument to infer \eqref{wt-u-C1-R} and that the maximal solution is defined on $[0,T]$. \par
\end{proof}

\begin{rem} \label{Rem31}
We could use only a $C^{1}$ regularity assumption on $\tilde{u}$ provided that this $\tilde{u}$ has the particular structure given by Proposition \ref{prop 2.1}.
While the estimate \eqref{wt-u-over-u} should be replaced by a weaken one (but sufficient for the proof of Theorem \ref{main-thm}):
\begin{equation}
\|u(T,\cdot)-\wt {u}(T,\cdot)\|_{C^1([0,L])} \leq C\nu.
\end{equation}

\end{rem}

\begin{rem} \label{RemInvarTrans}
As previously, the
conclusion of Lemma \ref{lem 3.1} on $[0,T]$ can be achieved on
$[t_0,t_0+T]$ for any $t_0 \in \mathbb{R}$ by
translation in time.
\end{rem}

\noindent {\bf Proof of Theorem \ref{main-thm}:} Again, we may assume the equilibrium $u^*$ to be 0, otherwise
we can replace $u$ by $u-u^*$ as the unknown in the system \e{sys}.

By Proposition \ref{prop 2.5}, we can deduce
that: there exist $C>0$, $\vep_0>0$ and $T>0$, such that for any $\vep\in
(0,\vep_0]$, there exists $\ov u \in L^{\infty}((0,T) \times \R; \mathbb{R}^n)$ such
that \e{la-i-over-u*}-\e{over-u-C-vep} hold.

For every $l\in \{1,\cdots,p\}$, let  $\tau_{l}$ be given by
\e{T^l}. Let
\begin{equation}
u^0(0,x)=\vph(x),\quad \forall x\in [0,L]. \end{equation}
The proof relies on a induction argument on $l$. By Proposition \ref{lem 3.1}, we see that there exist
$C>0$, $\vep_{_l}>0$ and $\nu_{_l}>0$, for any $\vep\in
(0,\vep_{_l}]$ and any $\nu\in (0,\nu_{_l}]$, if
\begin{equation}
\|u^{l-1}(\tau_{l-1},\cdot)-\ov u(\tau_{l-1},\cdot)\|_{C^1([0,L])}\leq \nu,
  \end{equation}
then there exists $u^l\in C^1([\tau_{l-1},\tau_l]\times \mathbb{R};
\mathbb{R}^n)$ such that
 \begin{align}\label{}
 &\pd {u^l}t+A(u^l)\pd {u^l}x=0,
   \quad \forall (t,x) \in [\tau_{l-1},\tau_l] \times \mathbb{R},
   \\\label{}
 & u^l(\tau_{l-1},x)=u^{l-1}(\tau_{l-1},x),\quad \forall x\in [0,L],
   \\\label{}
 & \| u^{l}(t,\cdot)\|_{C^1(\mathbb{R})} \leq C \vep+ C\nu,
   \quad\forall t\in [\tau_{l-1},\tau_l],
   \\\label{}
 & \|u^{l}(\tau_l,\cdot)-\ov u(\tau_l,\cdot)\|_{C^1([0,L])}\leq C \nu.
 \end{align}
Therefore, there exist $C>0$, $\vep_{\mathbf{f}}>0$ and
$\nu_{\mathbf{f}}>0$, such that for any $\vep\in (0,\vep_{\mathbf{f}}]$ and
for any $\nu\in (0,\nu_{\mathbf{f}}]$, if
\begin{equation}
\|\vph\|_{C^1([0,L])}\leq \nu,\end{equation}
then there exists $u^{\mathbf{f}}\in
C^1([0,\tau_{p}]\times [0,L]; \mathbb{R}^n)$ such that
 \begin{align}\label{}
 &\pd {u^{\mathbf{f}}}t+A(u^{\mathbf{f}})\pd {u^{\mathbf{f}}}x=0,
   \quad \forall (t,x)\in [0,\tau_{p}]\times [0,L],
   \\\label{}
 & u^{\mathbf{f}}(0,x)=\vph(x),\quad \forall x\in [0,L],
   \\\label{}
 & \|u^{\mathbf{f}}(t,\cdot)\|_{C^1([0,L])} \leq C \vep+ C\nu,
   \quad\forall t\in [0,\tau_{p}],
   \\\label{}
 & \|u^{\mathbf{f}}(\tau_{p},\cdot)-\ov u(\tau_{p},\cdot)\|_{C^1([0,L])}\leq C \nu.
 \end{align}
In the same way and in view of Remark \ref{RemInvarTrans}, there exist $C>0$,
$\vep_{\mathbf{b}}>0$ and $\nu_{\mathbf{b}}>0$, such that for any $\vep\in
(0,\vep_{\mathbf{b}}]$ and for any $\nu\in (0,\nu_{\mathbf{b}}]$, if
\begin{equation}
\|\psi\|_{C^1([0,L])}\leq \nu,
\end{equation}
then there exists $u^{\mathbf{b}}\in C^1([\tau_{p+1},T]\times [0,L];
\mathbb{R}^n)$ such that
 \begin{align}\label{}
 &\pd {u^{\mathbf{b}}}t+A(u^{\mathbf{b}})\pd {u^{\mathbf{b}}}x=0,
   \quad \forall (t,x)\in [\tau_{p+1},T] \times [0,L],
   \\\label{}
 & u^{\mathbf{b}}(T,x)=\psi(x),\quad \forall x\in [0,L],
   \\\label{}
 & \|u^{\mathbf{b}}(t,\cdot)\|_{C^1([0,L])} \leq C \vep+ C\nu,
   \quad\forall t\in [\tau_{p+1},T],
   \\\label{}
 & \|u^{\mathbf{b}}(\tau_{p+1},\cdot)-\ov u(\tau_{p+1},\cdot)\|_{C^1([0,L])}\leq C \nu.
 \end{align}

Now we can apply the result of Li and Rao
\cite{LiRao} near the equilibrium of $\ov u(\tau_{p},\cdot) = \overline{u}(\tau_{p+1},\cdot) = \overline{u}^{*}\in \Omega$: due to \eqref{CondLiRao}
there exists $\nu_{\mathbf{m}}>0$, such that for any $\nu\in
(0,\nu_{\mathbf{m}}]$, if $\|u(\tau_{p},\cdot) - \overline{u}^{*}\|_{C^{1}([0,L])}$ and $\|u(\tau_{p+1},\cdot)  - \overline{u}^{*}\|_{C^{1}([0,L])}$ are small enough, there exists $u^{\mathbf{m}}\in C^1([\tau_{p},\tau_{p+1}]\times [0,L]; \mathbb{R}^n)$ such that
 \begin{align}\label{}
 &\pd {u^{\mathbf{m}}}t+A(u^{\mathbf{m}})\pd {u^{\mathbf{m}}}x=0,
   \quad \forall (t,x)\in [\tau_{p},\tau_{p+1}] \times [0,L],
   \\\label{}
 & u^{\mathbf{m}}(\tau_{p},x)=u^{\mathbf{f}}(\tau_{p},x),\quad \forall x\in [0,L],
   \\\label{}
 & u^{\mathbf{m}}(\tau_{p+1},x)=u^{\mathbf{b}}(\tau_{p+1},x),\quad \forall x\in [0,L],
   \\\label{}
 & \|u^{\mathbf{m}}(t,\cdot)\|_{C^1([0,L])} \leq C\nu,
   \quad\forall t\in [\tau_{p},\tau_{p+1}].
 \end{align}

Combining all of the above, there exists $C>0$ such that
for any $\delta>0$, there exist $\vep>0$ and $\nu>0$ small enough,
such that for any $\vph,\psi \in C^1([0,L]; \mathbb{R}^n)$ satisfying
\begin{equation}
\|\vph\|_{C^1([0,L])}\leq \nu,
  \quad \|\psi\|_{C^1([0,L])}\leq \nu,
\end{equation}
one can construct $u\in C^1([0,T]\times [0,L]; \mathbb{R}^n)$ by
\begin{equation}
 u(t,x)=
 \begin{cases}
 u^{\mathbf{f}}(t,x), & \forall (t,x) \in [0,\tau_{p}] \times [0,L],\\
 u^{\mathbf{m}}(t,x), & \forall (t,x) \in  [\tau_{p},\tau_{p+1}] \times [0,L],\\
 u^{\mathbf{b}}(t,x), & \forall (t,x) \in  [\tau_{p+1},T] \times [0,L].
 \end{cases}
 \end{equation}
Now this function $u$ clearly satisfies
 \begin{align}
 & \pd ut+A(u)\pd ux=0,\quad \forall (t,x)\in [0,T] \times [0,L],\\
 &u(0,x)=\vph(x),\quad \forall x\in [0,L],\\
 & u(T,x)=\psi(x),\quad \forall x\in [0,L],\\
 & \|u(t,\cdot)\|_{C^1([0,L])} \leq C\vep+C\nu\leq \delta,
   \quad\forall t\in [0,T].
 \end{align}
This finishes the proof of Theorem \ref{main-thm}.
%
%
%
%
\section{Some models}
\noindent
{\bf Model 1:} Saint-Venant equations (shallow water equations)
\cite{Coron2,Gugat,GugatLeugering,Halleux}:
\begin{equation}\label{Saint-Venant}
 \begin{split}
 &\pd {H}{t}+\pd {}{x}(HV)=0,\\
 &\pd {V}{t} +\pd {}{x} (\f{V^2}2+gH)=0,\\
 \end{split}
 \end{equation}
where $g>0$ is the gravity constant.
 Let $U=(H,V)^{tr}$, \e{isentropic} is reduced to
\begin{equation}
     U_t+A(U)U_x=0
 \end{equation}
with
\begin{equation}
 A(U)=\left(
        \begin{array}{cc}
          V & H \\
          g & V \\
        \end{array}
      \right).
 \end{equation}
By the study of Model 2 (see below), Theorem \ref{main-thm} can be applied
to \e{Saint-Venant} near the equilibrium $U^*:=(H^*,V^*)$ where
$V^*=\sqrt{gH^*}$ with $H^*>0$ or near the equilibrium $U^{\star}:=
(H^{\star},V^{\star})$ where $V^{\star}=-\sqrt{gH^{\star}}$ with
$H^{\star} >0$. \par
\medskip
\noindent
{\bf Model 2:} 1-D isentropic gas dynamics equations in Eulerian
coordinates \cite{Glass}:
\begin{equation}\label{isentropic}
 \begin{split}
 &\pd {\rho}{t}+\pd {m}{x}=0,\\
 &\pd {m}{t} +\pd {}{x} (\f {m^2}{\rho}+p)=0,\\
 \end{split}
 \end{equation}
where
\begin{equation}
p=K \rho^{\gamma}\quad (K>0,\ 1 < \gamma <3).
\end{equation}
We can see \e{Saint-Venant} is a special case of \e{isentropic} when $p=g \rho^2/2$. \par
Moreover, let $U=(\rho,u)^{tr}$, \e{isentropic} is reduced to
\begin{equation}
 U_t+A(U)U_x=0
 \end{equation}
with
\begin{equation}
 A(U)=\left(
        \begin{array}{cc}
          u & \rho \\
          \f {p'(\rho)}{\rho} & u \\
        \end{array}
      \right).
 \end{equation}
The characteristic speeds and the corresponding eigenvectors are
 \begin{align}
 &\la_1(U)=u-\sqrt{p'(\rho)},\quad \la_2(U)=u+\sqrt{p'(\rho)},\\
 & r_1(U)=(\f {\rho}{\sqrt{p'(\rho)}},-1)^{tr},
 \quad r_2(U)=(\f {\rho}{\sqrt{p'(\rho)}},1)^{tr}.
  \end{align}

Let $U^*:=({\rho}^*,u^*)$ where  $u^*=\sqrt{p'(\rho ^*)}$ with
${\rho}^* >0$, that is, the fluid reaches the sound speed.
Then it is easy to check that
\begin{equation}
\la_1(U^*)=0<\la_2(U^*)=2\sqrt{p'(\rho ^*)} \end{equation}
and the hypothesis (H1) is satisfied as:
\begin{equation}
\nabla \la_1 (U^*)  \cdot  r_2(U^*)
 =\f {3-\gamma}2 > 0. \end{equation}
Similarly, if we let $U^{\star}:=({\rho}^{\star},u^{\star})$ where
$u^{\star}=\sqrt{p'(\rho ^{\star})}$ with ${\rho}^{\star} >0$ (which is the symmetric case of the latter), one can see that
\begin{equation}
\la_1(U^{\star})=-2\sqrt{p'(\rho ^{\star})}<\la_2(U^{\star})=0 \end{equation}
and the hypothesis (H1) is satisfied as:
\begin{equation}
\nabla \la_2 (U^{\star})  \cdot  r_1(U^{\star})
 =\f {\gamma-3}2 < 0. \end{equation}
Therefore, Theorem \ref{main-thm} can be applied to \e{isentropic}
near the equilibrium $U^*$ or $U^{\star}$. \par
\medskip
\noindent
{\bf Model 3:} 1-D full gas dynamics equations in Eulerian coordinates
\cite{Smoller}:
\begin{equation}
\label{1d-gas}
\begin{split}
&\pd {\rho}{t}+\pd {}{x} (\rho u)=0,\\
&\pd {}{t} (\rho u)+\pd {}{x} (\rho u^2+p)=0,\\
&\pd {}{t} \Big[\rho (\f {u^2}2 +e)\Big] +\pd {}{x} \Big[\rho u (\f
  {u^2}2+e)+pu\Big]=0.
\end{split}
\end{equation}
Assume the gas is polytropic, so that
\begin{equation}
e=c_v T =\f {c_vR\rho}{p} \quad (c_v>0,\ R>0)\end{equation}
and
\begin{equation}
p=k \mathrm{e}^{\f S{c_{_v}}} \rho^{\gamma}
   \quad (k>0,\ 1 < \gamma <3). \end{equation}
Thus, on the domain of $\rho>0$, we have $p_{\rho}>0$,
$p_{\rho\rho}>0$ and $p_{_S}>0$.
Model 3 generalizes Model 2 if we let $m:=\rho u$ and $S\equiv S_0\in\mathbb{R}$. \par
Let $U=(\rho,u,S)^{tr}$, then \e{1d-gas} can be rewritten as
\begin{equation}
U_t+A(U)U_x=0,\end{equation}
with
\begin{equation}
A(U)=\left(
            \begin{array}{ccc}
              u & \rho & 0 \\
              \f {p_{\rho}}{\rho} & u & \f {p_{_S}}{\rho} \\
              0 & 0 & u \\
             \end{array}
          \right).
   \end{equation}
The characteristic speeds and the corresponding eigenvectors are
\begin{align}
 &\la_1(U)=u-c,\quad \la_2(U)=u,\quad \la_3(U)=u+c, \\
 & r_1(U)=(\rho,-c,0)^{tr}, \quad r_2(U)=(p_{_S},0,-p_{\rho})^{tr},
 \quad r_3(U)=(\rho,c,0)^{tr},
\end{align}
with $c=\sqrt{p_{\rho}}$.

Let $U^*:=(\rho^*,0,S^*)$ where $\rho^* >0, S^*\in \mathbb{R}$, then
it is easy to check that
\begin{equation}
\la_1(U^*)<\la_2(U^*)=0<\la_3(U^*) \end{equation}
and the hypothesis (H1) is satisfied as:
\begin{equation}
\nabla \la_2 (U^*) \cdot r_1(U^*)=-c(U^*)<0  \quad \text{or} \quad \nabla \la_2 (U^*) \cdot r_3(U^*)=c(U^*)>0.
\end{equation}
Therefore, we can apply Theorem \ref{main-thm} to obtain boundary
controllability for \e{1d-gas} near the equilibrium $U^*$. \par
\medskip
\noindent
{\bf Model 4:} AR and MAR traffic flow system \cite{AW,MAR}:
\begin{equation}\label{traffic-flow}
 \begin{split}
 &\pd {\rho}{t}+\pd {}{x} (\rho u)=0,\\
 &\pd {}{t} (\rho(u+p(\rho))) +\pd {}{x} (\rho u (u+p(\rho)))=0,
 \end{split}
 \end{equation}
with
\begin{equation} \tag{AR} \label{AR}
    p= \rho^{\gamma}\quad (\gamma>0) ,
\end{equation}
and
\begin{equation} \tag{MAR} \label{MAR}
    p= (\frac{1}{\rho} -\frac{1}{\rho_{0}})^{-\gamma} \quad (\gamma>0,\ \rho_0>0) .
\end{equation}
We deduce system \eqref{AR} form system \eqref{MAR} by letting $\rho_{0} = +\infty$. \par
Let $U=(\rho,u)^{tr}$, \e{traffic-flow} is reduced to
\begin{equation}
U_t+A(U)U_x=0,
\end{equation}
with
\begin{equation}
A(U)=\left( \begin{array}{cc}
   u & \rho \\
   0 & u-\rho p'(\rho)
\end{array} \right).
 \end{equation}
The characteristic speeds and the corresponding eigenvectors are
\begin{align}
 &\la_1(U)=u-\rho p'(\rho),\quad \la_2(U)=u,\\
 & r_1(U)=(1, -p'(\rho))^{tr}, \quad r_2(U)=(1, 0)^{tr}.
\end{align}
Let $U^*:=({\rho}^*,u^*)$ where  $u^*=\rho^* p'(\rho ^*)$ with
$0<{\rho}^*< \rho_{0}$, then we have
\begin{equation}
\la_1(U^*)=0<\la_2(U^*)=\rho^* p'(\rho ^*)>0 ,
\end{equation}
and the hypothesis (H1) is satisfied as:
\begin{equation}
\nabla \la_1 (U^*)  \cdot r_2(U^*) =-p'(\rho^*)-\rho^* p''(\rho^*)=- \gamma (\rho^{*})^{-3} ( \frac{1}{\rho^{*}} - \frac{1}{\rho_{0}})^{-\gamma-2} (\frac{\rho^{*}}{\rho_{0}}+\gamma) <0.
\end{equation}
Similarly, if we let $U^{\star}:=({\rho}^{\star},0)$ with
$0<{\rho}^{\star}< \rho_{0}$, then it is easy to check that
\begin{equation}
\la_1(U^{\star})=-\rho^{\star} p'(\rho^{\star})<\la_2(U^{\star})=0,
\end{equation}
and the hypothesis (H1) is satisfied as:
\begin{equation} \nabla \la_2 (U^{\star}) \cdot  r_1(U^{\star})
 =-p'(\rho^{\star})=-\gamma (\rho^{\star})^{-2} ( \frac{1}{\rho^{\star}} - \frac{1}{\rho_{0}})^{-\gamma-1} < 0.
\end{equation}
Theorem \ref{main-thm} can thus be applied to \e{traffic-flow}
near the equilibrium $U^*$ or $U^{\star}$. \par
%
%
%
%
%
%
\appendix
\section{Proof of Proposition \ref{prop 2.4}}
\label{Proof2.4}
Proposition \ref{prop 2.4} belongs to the folklore of finite-dimensional control theory (see in particular Fillipov \cite{Filippov}). Since we have not found the exact required formulation in the literature, we give the proof in details for the sake of completeness. \par
We begin with a few notations.
\begin{defn}\label{defn 2.3}
$\mathcal{P}_{(a,b)}^N \subset L^{\infty}(a,b; \mathbb{R}^N)$ is
defined as the set consisting of all piecewise constant vector
functions on $(a,b)$. Next
$\mathcal{F}_{(a,b)}^N \subset \mathcal{P}_{(a,b)}^N$ is defined as
the set consisting of all piecewise constant vector functions on
$(a,b)$ with at most one nontrivial component, i.e.,
$f=(f_1,\cdots,f_N)^{tr}\in \mathcal{F}_{(a,b)}^N$ if and only if there exist $p \in \N$,
indices $i_1,\cdots,i_p \in \{1,\cdots,N\}$, constants $f_{i_1}^1,\cdots,f_{i_p}^p\in \mathbb{R}$
and $a=t_0<t_1<\cdots<t_p=b$ such that
\begin{equation}
f(t)=f_{i_l}^l e_{i_l},\quad  \forall t\in (t_{l-1},t_l),
 l=1,\cdots,p,
\end{equation}
where $e_1,\cdots,e_N$ denote the standard basis of $\mathbb{R}^N$.
\end{defn}
Now we deduce the following statement.
\begin{prop}\label{prop 2.3}
$\mathcal{F}_{(0,1)}^N$ is dense in $L^{\infty}(0,1;\mathbb{R}^N)$
with respect to the weak-$*$ topology, more precisely,  for any $f\in
L^{\infty}(0,1;\mathbb{R}^N)$, there exists a sequence
$\{f^k\}_{k=1}^{\infty} \subset \mathcal{F}_{(0,1)}^N$ such that
\begin{equation}
\label{fk-weak*-limit}
\lim_{k\rightarrow \infty}\int_0^1 f^k(t)\cdot h(t) dt = \int_0^1 f(t)\cdot h(t) dt,
\quad \forall h\in  L^1(0,1;\mathbb{R}^N),
\end{equation}
\begin{equation}
\label{fk-C-f}
\|f^k\|_{L^{\infty}(0,1; \mathbb{R}^N)} \leq C \|f\|_{L^{\infty}(0,1; \mathbb{R}^N)}, \quad  \forall k\in \mathbb{N}.
\end{equation}
\end{prop}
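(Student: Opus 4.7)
The plan is a two-step approximation. \textbf{Step 1 (approximation by piecewise constants):} I would first approximate $f \in L^\infty(0,1;\mathbb{R}^N)$ by a sequence $g^k \in \mathcal{P}_{(0,1)}^N$ with $\|g^k\|_{L^\infty} \leq \|f\|_{L^\infty}$ and $g^k \to f$ in $L^1$. This is a classical density result: take $g^k$ to be the mean value of $f$ on each interval of the $k$-th dyadic partition of $(0,1)$. Strong $L^1$ convergence plus the uniform $L^\infty$ bound implies weak-$\ast$ convergence of $g^k$ to $f$ in $L^\infty$.

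\textbf{Step 2 (componentwise splitting).} Given a piecewise-constant function $g \in \mathcal{P}_{(0,1)}^N$ which is equal to $v=(v_1,\dots,v_N)$ on some interval $(a,b)$, I split $(a,b)$ into $MN$ equal subintervals of length $(b-a)/(MN)$, arranged as $M$ blocks of $N$ consecutive slots; on the $i$-th slot of every block I set the approximating function equal to $Nv_i e_i$. Repeating this construction on every constant piece of $g^k$ produces $f^{k,M} \in \mathcal{F}_{(0,1)}^N$ with
\begin{equation*}
\|f^{k,M}\|_{L^\infty} \leq N\|g^k\|_{L^\infty} \leq N\|f\|_{L^\infty},
\end{equation*}
so the constant $C=N$ works in \eqref{fk-C-f}.

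To see that $f^{k,M} \to g^k$ weak-$\ast$ in $L^\infty$ as $M\to\infty$, note that $f^{k,M} - g^k$ is, on each constant piece of $g^k$, a sum over $i$ of $v_i e_i$ times the function $\psi_i^M(t) := N \mathbf{1}_{\{t \text{ in the $i$-th slot of its block}\}} - 1$, which is bounded by $N$ and has mean zero on every block. A standard Riemann–Lebesgue-type argument (check first for continuous test functions $h$, using that $\psi_i^M$ has vanishing block averages, then extend by density using the uniform bound $\|\psi_i^M\|_\infty \leq N$) shows $\psi_i^M \to 0$ weak-$\ast$ in $L^\infty$. Hence $f^{k,M} \to g^k$ weak-$\ast$ as $M \to \infty$.

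\textbf{Combining the two steps.} Since $L^1(0,1;\mathbb{R}^N)$ is separable, weak-$\ast$ convergence in $L^\infty$ on a countable dense subset of test functions characterizes weak-$\ast$ convergence under a uniform bound. A standard diagonal extraction $f^k := f^{k,M_k}$ with $M_k\to\infty$ sufficiently fast then yields $f^k \to f$ weak-$\ast$ in $L^\infty$ while preserving $\|f^k\|_{L^\infty} \leq N\|f\|_{L^\infty}$. The main (minor) obstacle is the bookkeeping of the diagonal argument, which is handled by testing against a countable dense family in $L^1$ and using the uniform bound to pass to a subsequence.
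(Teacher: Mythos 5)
Your proposal is correct and follows essentially the same two-step route as the paper: first approximate by piecewise constants with a uniform $L^\infty$ bound, then approximate each constant piece by cycling through the $N$ coordinates on $N$ equal subintervals with values scaled by $N$ (which is exactly the paper's explicit formula for $\ov f^k$), giving $C=N$ in \eqref{fk-C-f}. The extra detail you supply on the mean-zero/Riemann--Lebesgue argument and the diagonal extraction fills in steps the paper leaves implicit, but the construction and decomposition are the same.
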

\begin{proof}
It is classical that $\mathcal{P}_{(0,1)}^N$ is dense in $L^{\infty}(0,1;\mathbb{R}^N)$ for the weak-$*$ topology (moreover one can require \eqref{fk-C-f} to hold on an approximating sequence). Hence it suffices to prove that
$\mathcal{F}_{(0,1)}^N$ is dense in $\mathcal{P}_{(0,1)}^N$ with
respect to weak-$*$ topology.
To do this, we first prove \e{fk-weak*-limit} in the simpler case
where $f$ is a constant function:
\begin{equation}
f(t)=\ov f=(\ov f_1,\cdots,\ov f_N)^{tr} \in \mathbb{R}^N,
   \quad \forall t\in [0,1].
\end{equation}
For any $k \in \N$, we let $\ov f^k \in
\mathcal{F}_{(0,1)}^N$ be defined as
\begin{equation}
\ov f^k(t):=N \ov f_i e_i,
 \ \forall t\in \Big(\f {(j-1)N+i-1}{kN},
 \f {(j-1)N+i}{kN}\Big), \forall j\in \{1,\cdots,k\},
 i=1,\cdots N.\end{equation}
Clearly $\ov f^k$ converges weakly-$*$ to $f$ in $L^{\infty}(0,1;\mathbb{R}^N)$ as $k$ tends to $\infty$.
Now we treat the general case where $f\in \mathcal{P}_{(0,1)}^N$. We introduce times $0=t_0<t_1<\cdots<t_p=1$ such that
\begin{equation}
 f(t)=\ov f^l, \quad \forall t\in (t_{l-1},t_l), l=1,\cdots,p,
 \end{equation}
where the $\overline{f}^{l}$ are constants. \par
From the previous arguments, we can obtain by translation and
scaling that there exists $\{\ov f^{l^k}\}_{k=1}^{\infty} \subset
\mathcal{F}_{(t_{l-1},t_l)}^N$ such that $\ov f^{l^k}$ converges
weakly-$*$ to $\ov f^l$ in $L^{\infty}(t_{l-1},t_l; \mathbb{R}^N)$
as $k$ tends to $\infty$. Finally, for any $k\in \N$, we
let
\begin{equation}
f^k(t):=\ov f^{l^k}(t), \quad t\in (t_{l-1},t_l), l=1,\cdots,p.
\end{equation}
It is obvious that $\{f^k\}_{k=1}^{\infty} \subset
\mathcal{F}_{(0,1)}^N$ and  $f^k$ converges weakly-$*$ to $f$ in
$L^{\infty}(0,1; \mathbb{R}^N)$ as $k$ tends to $\infty$, i.e.,
\e{fk-weak*-limit} holds. Observe that \eqref{fk-C-f} holds.
\end{proof}
\noindent
{\bf Back to the proof of Proposition \ref{prop 2.4}.} Since $\alpha\in L^{\infty}(0,1; \mathbb{R}^{n-1})$, the solution
$z\in C^0([0,1];\mathbb{R}^n)$ to the ordinary differential equation
\e{ode-z} is Lipschitz continuous, since
\begin{equation} \label{A9}
z(s)=\int_0^s \sum_{j\neq m}
  \alpha_j(\theta)r_j(z(\theta)) d \theta,
  \quad \forall s\in [0,1]. \end{equation}
Let $\alpha$ be such that \e{alpha-vep} holds. By Proposition
\ref{prop 2.3}, there exists a sequence $\{\alpha^k
\}_{k=1}^{\infty} \subset \mathcal{F}_{(0,1)}^{n-1}$ with the
notation $ \alpha^k:=(\alpha_1^k,\cdots, \alpha_{m-1}^k,
\alpha_{m+1}^k, \cdots, \alpha_n^k)$, which converges weakly-$*$ to
$\alpha$ in $L^{\infty}(0,1; \mathbb{R}^{n-1})$ and
\begin{equation}
\label{alpha-C-vep}
  \|\alpha^k \|_{L^{\infty}(0,1; \mathbb{R}^{n-1})}
  \leq C \|\alpha\|_{L^{\infty}(0,1; \mathbb{R}^{n-1})}
  \leq C \vep, \quad \forall k\in \N. \end{equation}

 Let $z^k\in C^0([0,S_k];
\mathbb{R}^n)$ be the solution to the Cauchy problem
\begin{equation}
 \frac{dz^k}{ds}=\sum_{j\neq m} \alpha_j^k(s)r_j(z^k),
  \quad z^k(0)=0,\end{equation}
where $S_k \in (0,1]$. By \eqref{alpha-C-vep}, $z^k$ is uniformly Lipschitz continuous.
%

If $\vep$ is small enough, then by \e{alpha-vep}, we can deduce that
\begin{equation}
S_k=1,
\end{equation}
that is, $z_{k}$ is defined on the whole time interval $[0,1]$, for all $k\in \N$, and
\begin{equation}
   \|z^k\|_{W^{1,\infty}(0,1;\mathbb{R}^n)} \leq C \vep,
  \quad \forall k\in \N. \end{equation}

By the Arzel\`{a}-Ascoli Theorem, there exists a subsequence
$\{z^{k^l}\}_{l=1}^{\infty} \subset \{z^k\}_{k=1}^{\infty}$ and
$z^{\infty}\in C^0([0,1]; \mathbb{R}^n)$ such that
 $z^{k^l}$ converges to $z^{\infty}$ in $C^0([0,1]; \mathbb{R}^n)$ as
$l$ tends to $\infty$.
Now it is straightforward to pass to the limit in \eqref{A9} (even, the limit is unique). The conclusion follows. \par

\end{document}